\newtheorem{Corollary}{Corollary}
\newtheorem{Theorem}{Theorem}
\newtheorem{Lemma}{Lemma}
\newtheorem{Claim}{Claim}
\newtheorem{Proposition}{Proposition}
 \title{On the complexity of finding well-balanced orientations with upper bounds on the out-degrees}
\author{Florian H\"orsch, Zolt\'an Szigeti}
\begin{document}
\maketitle

 \begin{abstract}
We show that the problem of deciding whether a given graph $G$ has a well-balanced orientation $\vec{G}$ such that $d_{\vec{G}}^+(v)\leq \ell(v)$ for all $v \in V(G)$ for a given function $\ell:V(G)\rightarrow \mathbb{Z}_{\geq 0}$ is NP-complete. We also prove a similar result for best-balanced orientations. This improves a result of Bern\' ath, Iwata, Kir\' aly, Kir\'aly and Szigeti and answers a question of Frank.
\end{abstract}

\section{Introduction}

This article contains a negative result concerning the possibility of deciding whether a given graph has a well-balanced or best-balanced orientation with a certain extra property. Any undefined notions can be found in Section \ref{prel}.
\medskip

During the history of graph orientations, the problem of characterizing graphs admitting orientations with certain connectivity properties has played a decisive role. The first important theorem due to Robbins \cite{r39} states that a graph has a strongly connected orientation if and only if it is 2-edge-connected. In 1960, Nash-Williams \cite{nw60} proved several theorems generalizing the result of Robbins. The first one is the following natural generalization of the result of Robbins to higher global arc-connectivity.

\begin{Theorem}\label{nwfaible}
Let $G$ be a graph and $k$ a positive integer. Then $G$ has a $k$-arc-connected orientation if and only if $G$ is $2k$-edge-connected.
\end{Theorem}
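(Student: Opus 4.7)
My plan is to prove the two directions separately. \emph{Necessity} is a direct counting argument: if $\vec{G}$ is a $k$-arc-connected orientation of $G$, then for every $\emptyset \neq X \subsetneq V(G)$ we have $d_{\vec{G}}^+(X) \geq k$ and $d_{\vec{G}}^-(X) \geq k$, hence $d_G(X) = d_{\vec{G}}^+(X) + d_{\vec{G}}^-(X) \geq 2k$, so $G$ is indeed $2k$-edge-connected.

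For \emph{sufficiency}, I would first dispatch the Eulerian case. If $G$ is $2k$-edge-connected and every vertex has even degree, then orienting $G$ along an Euler tour yields an orientation $\vec{G}$ with $d_{\vec{G}}^+(v) = d_{\vec{G}}^-(v)$ at every $v$. Summing this balance over any $\emptyset \neq X \subsetneq V(G)$ gives $d_{\vec{G}}^+(X) = d_{\vec{G}}^-(X) = d_G(X)/2 \geq k$, so $\vec{G}$ is $k$-arc-connected.

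For the general case, I would invoke the undirected splitting-off theorem of Lov\'asz (later sharpened by Mader): in a $2k$-edge-connected graph $G$, for an appropriately chosen vertex $v$ there exist edges $uv, vw$ that may be replaced by a single edge $uw$ while preserving $2k$-edge-connectivity on $V(G) \setminus \{v\}$. Iterating such splits at $v$ eventually isolates it; deleting $v$ and applying induction on $|V(G)|$ produces a $k$-arc-connected orientation of the smaller graph, which is then lifted back to $G$ by replacing each spliced edge $uw$ by the directed path $u \to v \to w$. Odd-degree vertices are handled by first adding a fictitious vertex $v^*$ joined by one edge to each odd-degree vertex, which makes the graph Eulerian, and then splitting off $v^*$ before running the induction.

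The hard part is establishing the splitting-off theorem itself. Its proof exploits the submodularity of the cut function $d_G$ and a careful analysis of the ``dangerous'' sets $X$ with $d_G(X) \in \{2k, 2k+1\}$ that are crossed by many edges at $v$. The idea is to show that if no admissible split exists at $v$, then these tight sets form a cross-free structure incompatible with $2k$-edge-connectivity of $G$, yielding a contradiction. The bookkeeping is delicate, particularly around vertices of odd degree, and this is where the bulk of the classical argument lies.
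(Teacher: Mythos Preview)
The paper does not prove Theorem~\ref{nwfaible}; it is quoted as a classical result of Nash-Williams, and the only remark the paper makes about its proof is that ``a relatively simple proof of Theorem~\ref{nwfaible} relying on a splitting off theorem of Lov\'asz has been found by Frank \cite{book}''. Your sketch is exactly this Lov\'asz--Frank route, so in spirit there is nothing to compare against beyond that one sentence.

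On the sketch itself: the necessity argument and the Eulerian case are correct, and the inductive scheme---split off a vertex completely, orient the smaller graph by induction, lift each split arc $u\to w$ back as $u\to v\to w$---is the standard one and works when the vertex being split off has even degree. The imprecise point is your treatment of odd-degree vertices. Adding a fictitious vertex $v^*$ adjacent to every odd-degree vertex and then splitting $v^*$ off completely does produce an Eulerian $2k$-edge-connected graph $G^*$ on $V(G)$, but $E(G^*)=E(G)\cup M$, where $M$ is the set of newly created split edges; $G^*$ is \emph{not} $G$. Running your induction (or an Euler tour) on $G^*$ and then lifting the splits back gives a $k$-arc-connected orientation of $G+v^*$, not of $G$, and simply deleting the arcs at $v^*$ can drop $d^-_{\vec G}(X)$ below $k$ for some $\emptyset\neq X\subsetneq V(G)$: from $d^-_{\vec{G}+v^*}(X)\ge k$ and $d^-_{\vec{G}+v^*}(X\cup\{v^*\})\ge k$ one only gets $d^-_{\vec G}(X)\ge k-\min\bigl(|\{\text{arcs }v^*\!\to X\}|,\ |\{\text{arcs }(V\setminus X)\to v^*\}|\bigr)$, which need not be $\ge k$. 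The published splitting-off proofs arrange the parity issue differently, so while your overall plan is the right one, this particular shortcut for odd degrees does not close the argument.
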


While Theorem \ref{nwfaible} resolves the problem of finding graph orientations of high global arc-connectivity, Nash-Williams also considered orientations satisfying local arc-connectivity conditions. We say that an orientation $\vec{G}$ of a graph $G$ is well-balanced if $\lambda_{\vec{G}}(u,v)\geq \lfloor \frac{\lambda_{G}(u,v)}{2}\rfloor $ for all $(u,v) \in V(G)\times V(G)$. If additionally $d_{\vec{G}}^+(v) \in \{\lfloor \frac{d_G(v)}{2}\rfloor,\lceil \frac{d_G(v)}{2}\rceil\}$ holds for all $v \in V(G)$, then $\vec{G}$ is called best-balanced. Nash-Williams proved the following result in \cite{nw60}.

\begin{Theorem}\label{nwfort}
Every graph has a best-balanced orientation.
\end{Theorem}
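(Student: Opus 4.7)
The plan is to first dispose of the Eulerian case by a direct Eulerian-orientation argument and then reduce the general case to it by a splitting-off construction at an auxiliary vertex.

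\textbf{Eulerian case.} Suppose every vertex of $G$ has even degree. I would decompose the edge set of $G$ into edge-disjoint circuits and orient each circuit consistently. The resulting orientation $\vec{G}$ satisfies $d^+_{\vec{G}}(v)=d_G(v)/2$ at every vertex, and summing over $v\in X$ yields $d^+_{\vec{G}}(X)=d_G(X)/2$ for every subset $X\subseteq V(G)$. By min-cut duality this gives $\lambda_{\vec{G}}(u,v)=\lambda_G(u,v)/2$; and since all $d_G(v)$ are even, $\lambda_G(u,v)$ is also even, hence $\lambda_{\vec{G}}(u,v)=\lfloor\lambda_G(u,v)/2\rfloor$. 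So $\vec{G}$ is best-balanced.

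\textbf{Reduction to the Eulerian case.} In general, let $T\subseteq V(G)$ be the set of odd-degree vertices of $G$ (of even cardinality). I would form $G^+$ by adding a new vertex $s$ and one edge $sv$ for each $v\in T$; note that $G^+$ is Eulerian and $\lambda_{G^+}(u,v)\geq\lambda_G(u,v)$ for all $u,v\in V(G)$. Then I would apply Mader's splitting-off theorem at $s$: its edges can be paired as $\{sv_i,sw_i\}$ and each pair replaced by a single edge $v_iw_i$ in such a way that all local edge-connectivities among vertices of $V(G)$ are preserved. The resulting graph $G'$ has vertex set $V(G)$, is Eulerian, and its edge set decomposes as $E(G)\cup M$, where $M$ is a set of $|T|/2$ auxiliary edges between vertices of $T$; moreover $\lambda_{G'}(u,v)\geq\lambda_G(u,v)$.

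\textbf{Transferring the orientation.} The Eulerian-case argument yields an Eulerian orientation $\vec{G'}$ of $G'$, whose restriction to $E(G)$ defines an orientation $\vec{G}$ of $G$. The degree-balance condition is then clean: for $v\notin T$ one has $d^+_{\vec{G}}(v)=d^+_{\vec{G'}}(v)=d_G(v)/2$, while for $v\in T$ the unique $M$-edge at $v$ shifts $d^+_{\vec{G'}}(v)=(d_G(v)+1)/2$ by $0$ or $1$, producing one of $\lceil d_G(v)/2\rceil$ or $\lfloor d_G(v)/2\rfloor$ (using that $d_G(v)$ is odd).

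\textbf{Main obstacle.} The hard part will be the local arc-connectivity inequality $\lambda_{\vec{G}}(u,v)\geq\lfloor\lambda_G(u,v)/2\rfloor$. For a minimum $uv$-cut $X$ in $G$ one computes
\[
d^+_{\vec{G}}(X)=\tfrac{1}{2}d_G(X)+\tfrac{1}{2}\bigl(m^-(X)-m^+(X)\bigr),
\]
where $m^+(X)$ and $m^-(X)$ count the $M$-arcs leaving and entering $X$ in $\vec{G'}$. An arbitrary Eulerian orientation of $G'$ need not make $m^-(X)-m^+(X)$ large enough on every tight cut. To fix this, I would exploit the freedom to reverse directed circuits in $\vec{G'}$, together with the parity identity $m(X)\equiv d_G(X)\pmod 2$ coming from the Eulerian property of $G'$, in order to adjust the orientations of the $M$-edges on tight cuts. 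In modern treatments this last step is cleanest when derived from Frank's general orientation theorem applied to a suitably crossing-supermodular lower bound function, and this is where I expect the technical core of the argument to lie.
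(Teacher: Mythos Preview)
First, note that the paper does not prove Theorem~\ref{nwfort} at all: it is quoted as Nash-Williams' result, and the text explicitly remarks that ``no simple proof of Theorem~\ref{nwfort} is known'' and that all known proofs ``are pretty involved.'' So there is no in-paper argument to compare your attempt against.

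As for the outline itself, the Eulerian case and the degree-balance check after deleting $M$ are correct, and your displayed identity for $d^+_{\vec G}(X)$ is right. The genuine gap is precisely the part you label ``Main obstacle,'' and your proposed fix does not close it. Reversing directed circuits in $\vec{G'}$ lets you range over all Eulerian orientations of $G'$, but for a \emph{fixed} pairing $M$ that is in general not enough: on a tight cut $X$ (one with $d_G(X)=\lambda_G(u,v)$ for some $u\in X$, $v\notin X$) your identity forces the $M$-arcs across $X$ to be split essentially half-and-half, and nothing in a generic Mader splitting sequence guarantees that $m(X)$ is small enough, or structured enough, for this to be achievable on all tight cuts simultaneously. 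The actual proofs hinge on first establishing the existence of a \emph{feasible} odd-vertex pairing $M$ (one satisfying a cut inequality tying $m(X)$ to $d_G(X)$ and the local-connectivity requirement on $X$), after which an Eulerian orientation of $G+M$ does restrict to a best-balanced orientation of $G$. Proving that such a pairing exists is exactly the ``technical core'' you defer in your last sentence, so as written this is an outline of the known strategy rather than a proof.
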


Observe that Theorem \ref{nwfort} implies Theorem \ref{nwfaible}. In the last decades, numerous attempts have been made to develop theory surrounding Theorems \ref{nwfaible} and \ref{nwfort}. These attempts turned out to be much more successful when concerning Theorem \ref{nwfaible} than when concerning Theorem \ref{nwfort}. For example, while a relatively simple proof of Theorem \ref{nwfaible} relying on a splitting off theorem of Lov\' asz has been found by Frank \cite{book}, no simple proof of Theorem \ref{nwfort} is known. Even though since the original, very complicated proof of Nash-Williams new proofs have been found by Mader \cite{m78} and Frank \cite{f127}, all of them are pretty involved. 

Another branch of research in the theory sorrounding Theorems \ref{nwfaible} and \ref{nwfort} consists in characterizing graphs which admit orientations satisfying some extra properties in addition to the connectivity conditions. These problems turn out to be much more tractable when trying to generalize Theorem \ref{nwfaible} than when trying to generalize Theorem \ref{nwfort}.

 For generalizing Theorem \ref{nwfaible}, polymatroid theory has proven to be a valuable tool. It allowed Frank \cite{book} to solve the problem of deciding whether a mixed graph has a $k$-arc-connected orientation for some given positive integer $k$ and to solve the more general problem of finding a minimum cost $k$-arc-connected orientation of a given graph where a cost is given for both possible orientations of each edge.

In \cite{bikks}, Bern\'ath et al. attempted to obtain similar generalizations for Theorem \ref{nwfort} which yielded several negative results, see also \cite{b}. For example, the problems of finding well-balanced and best-balanced orientations minimizing a given weight function were proven to be NP-complete in \cite{bikks}. The problem of deciding whether a mixed graph has a best-balanced orientation has also been proven to be NP-complete in \cite{bikks}. A proof that the problem of deciding whether a mixed graph has a well-balanced orientation is NP-complete has been found by Bern\' ath and Joret \cite{bj}.

Another extra property which can be imposed on the orientation is degree constraints. Here a generalization of Theorem \ref{nwfaible} has been obtained by Frank \cite{frank} using comparatively elementary methods. As its proof is constructive, we obtain the following result.

\begin{Theorem}
There is a polynomial time algorithm which, given a graph $G$, a positive integer $k$ and two functions $\ell_1,\ell_2:V(G) \rightarrow \mathbb{Z}_{\geq 0}$, decides whether there is a $k$-arc-connected orientation $\vec{G}$ of $G$ such that $\ell_1(v)\leq d_{\vec{G}}^+(v)\leq \ell_2(v)$ for all $v \in V(G)$.
\end{Theorem}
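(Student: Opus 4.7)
The plan is to reduce the decision problem to intersecting a base polyhedron with a box, a task that admits polynomial-time algorithms via submodular function minimization. First, I would observe that an orientation $\vec{G}$ of $G$ is $k$-arc-connected exactly when $d^+_{\vec{G}}(X)\geq k$ for every proper nonempty $X\subset V(G)$. Setting $x(v)=d^+_{\vec{G}}(v)$ and using the identity $d^+_{\vec{G}}(X)=x(X)-i_G(X)$, where $i_G(X)$ denotes the number of edges of $G$ with both endpoints in $X$, the connectivity condition becomes $x(X)\geq k+i_G(X)$, together with the in-hand equality $x(V(G))=|E(G)|$. Since $i_G$ is supermodular, these inequalities define the base polyhedron $B(p)$ of an integer-valued supermodular function $p$. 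By a Hakimi-type correspondence made explicit in \cite{frank}, the integer points of $B(p)$ are exactly the out-degree sequences of $k$-arc-connected orientations of $G$.

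The original problem then translates into deciding whether $B(p)\cap\{x:\ell_1\leq x\leq \ell_2\}$ contains an integer point. Because $p$ is integer-valued, $B(p)$ is an integer polyhedron, so integrality of $x$ can be ignored once a real point in the intersection is found. The box-intersection theorem for base polyhedra yields a good characterization of non-emptiness in the form of cut-type inequalities over all $X\subseteq V(G)$, and these inequalities can be tested in polynomial time by submodular function minimization. This already gives a polynomial-time decision procedure.

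To actually construct an orientation realizing a feasible $x$, I would invoke the constructive ingredient of Frank's proof in \cite{frank}: one maintains a mixed graph and orients edges one by one, keeping the supermodular feasibility conditions intact via a local augmenting argument, or equivalently one applies a splitting-off step in the spirit of Mader and Lov\'asz, then recurses on a smaller graph. Each step removes at least one unoriented edge, so after $O(|E(G)|)$ rounds an orientation is output; combined with polynomial-time submodular minimization for the feasibility check at each round, this yields the claimed polynomial overall running time.

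The main obstacle, then, is not conceptual but bookkeeping: one must argue that the auxiliary supermodular function $p$ stays integer-valued and can be evaluated in polynomial time after each orientation step, and that the box constraints restricted to the remaining unoriented part still define a nonempty base polyhedron. Both points follow from the fact that the splitting-off (or single-edge orientation) step preserves the relevant supermodular inequalities on the contracted or reduced graph, which is the technical heart of \cite{frank} and where I would expect the proof to spend most of its effort.
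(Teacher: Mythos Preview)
The paper gives no proof of this theorem; it is quoted as background, attributed to Frank \cite{frank}, with the sole remark that Frank's argument is constructive and therefore yields a polynomial-time algorithm. There is thus no proof in the paper against which to compare your proposal.

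That said, your outline is a correct route to the result. The identity $d^+_{\vec G}(X)=x(X)-i_G(X)$ holds for \emph{every} orientation with out-degree vector $x$, so once $x(X)\ge i_G(X)+k$ for all proper nonempty $X$, any orientation realizing $x$ is automatically $k$-arc-connected; combined with the Hakimi-type realizability theorem you invoke, this does identify the feasible out-degree vectors with the integer points of a base polyhedron, after which box intersection via submodular minimization gives a polynomial decision procedure. This is heavier machinery than the paper's phrase ``comparatively elementary methods'' suggests for \cite{frank}; your third and fourth paragraphs, sketching an edge-by-edge constructive orientation, are closer in spirit to what one would expect there, though those details cannot be checked against the present paper.
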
 

Yet again, a similar generalization of Theorem \ref{nwfort} was proven to be out of reach in \cite{bikks}.

\begin{Theorem}\label{uplow}
The problem of deciding whether, given a graph $G$ and two functions $\ell_1,\ell_2:V(G) \rightarrow \mathbb{Z}_{\geq 0}$, there is a well-balanced orientation $\vec{G}$ of $G$ such that $\ell_1(v)\leq d_{\vec{G}}^+(v) \leq \ell_2(v)$ for all $v \in V(G)$, is NP-complete.
\end{Theorem}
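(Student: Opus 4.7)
The plan is to establish NP-completeness via the standard two-step argument. Membership in NP is straightforward: an orientation $\vec{G}$ itself serves as a polynomial-size certificate, since the degree bounds are checked in linear time and the well-balanced condition $\lambda_{\vec{G}}(u,v)\geq \lfloor \lambda_{G}(u,v)/2\rfloor$ for every ordered pair $(u,v)\in V(G)\times V(G)$ can be verified by $|V(G)|^{2}$ max-flow computations.

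For the hardness part, I would aim at a polynomial reduction from a canonical NP-hard problem such as $3$-SAT (or a variant like NAE-$3$-SAT, whose symmetry often interacts cleanly with well-balancedness). Given a formula $\varphi$, I would build a graph $G_{\varphi}$ together with functions $\ell_{1},\ell_{2}:V(G_{\varphi})\rightarrow \mathbb{Z}_{\geq 0}$ composed of three types of components. \emph{Variable gadgets} contain an ``indicator'' edge whose two possible orientations correspond to the truth values of a variable $x$; the degree bounds are tuned so that only these two consistent orientations of the gadget are feasible. \emph{Clause gadgets}, one per clause $C$, are built so that the well-balanced condition forces at least one of the three literal edges entering them to point in the satisfying direction. \emph{Consistency edges} propagate the assignment between multiple occurrences of the same variable, again relying on $\ell_{1}$ and $\ell_{2}$ to pin down their orientation once the corresponding variable gadget is resolved.

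The delicate point, and the main obstacle, is that the well-balanced condition is genuinely global: every vertex pair imposes a cut inequality, so care is needed to prevent spurious cut constraints from either overconstraining or underconstraining the gadgets. A standard way around this is to inflate the edge-connectivity between auxiliary vertices by adding many parallel edges, so that all interesting cuts of $G_{\varphi}$ occur exactly at the intended places inside the gadgets, and to arrange the gadgets to be essentially Eulerian so that the well-balanced condition reduces locally to a parity or out-degree statement. With such a setup the edge-connectivity $\lambda_{G_{\varphi}}(u,v)$ is controllable, and the inequality $\lambda_{\vec{G}_{\varphi}}(u,v)\geq \lfloor \lambda_{G_{\varphi}}(u,v)/2\rfloor$ reduces, pair by pair, to a discrete constraint that matches the intended logical requirement.

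Once the gadgets are designed, correctness should follow routinely: a satisfying assignment of $\varphi$ yields a valid well-balanced orientation of $G_{\varphi}$ by concatenating the canonical orientations of its gadgets, while conversely any feasible orientation of $G_{\varphi}$ lets one read off a consistent truth assignment satisfying every clause from the orientation of the indicator edges. Since $3$-SAT is NP-hard and the construction of $G_{\varphi}$, $\ell_{1}$, $\ell_{2}$ is clearly polynomial in $|\varphi|$, NP-hardness follows, completing the proof together with the NP membership observed above.
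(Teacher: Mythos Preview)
Your proposal is a plan, not a proof: the entire difficulty of such a reduction lies in the concrete gadget design, and you never actually construct any gadgets or verify that the required properties can be achieved simultaneously. You correctly identify the main obstacle---that well-balancedness is a global condition involving all cuts---but then simply assert that ``inflating edge-connectivity'' and making gadgets ``essentially Eulerian'' will tame it. These heuristics are not self-evidently sufficient; in particular, adding parallel edges to raise $\lambda_G$ also raises the target $\lfloor \lambda_G/2\rfloor$ that the orientation must meet, so one cannot simply wash away unwanted cut constraints this way. Until the variable, clause, and consistency gadgets are written down explicitly and the two directions of the equivalence are checked, there is no proof.

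For comparison, the paper does not prove Theorem~\ref{uplow} directly at all: it is quoted as a prior result of Bern\'ath, Iwata, Kir\'aly, Kir\'aly and Szigeti. What the paper proves is the strictly stronger Theorem~\ref{well} (upper bounds only, no lower bounds), from which Theorem~\ref{uplow} follows immediately. The reduction is from \textsc{Cubic Vertex Cover}, not a SAT variant, and the global nature of well-balancedness is controlled by an explicit structural device: the constructed graph $G$ has a distinguished vertex $a$ with $\lambda_G(s,a)=d_G(s)$ for every other vertex $s$ (Proposition~\ref{wbd}), which via Proposition~\ref{prel3} reduces the well-balanced condition to checking only the cuts separating $a$ from each single vertex. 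A further normal form (``convenient'' orientations) is used so that feasible orientations correspond cleanly to vertex covers. This is precisely the kind of concrete mechanism your sketch is missing.
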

 A similar result for best-balanced orientations is also proven in \cite{bikks}.
\medskip

In this article, we deal with the question whether a version of the above problem with milder restrictions on the vertex degrees is better tractable. We are interested in the case when instead of imposing an upper and a lower bound on the out-degree of every vertex only an upper bound is imposed. 

More concretely, we consider the following two problems:
\medskip

\noindent {\bf Upper-bounded well-balanced orientation (UBWBO):}
\smallskip

\noindent Input: A graph $G$, a function $\ell:V(G)\rightarrow \mathbb{Z}_{\geq 0}$.

\noindent Question: Is there a well-balanced orientation $\vec{G}$ of $G$ such that $d_{\vec{G}}^+(v)\leq \ell(v)$ for all $v \in V(G)$?
\medskip

\noindent {\bf Upper-bounded best-balanced orientation (UBBBO):}
\smallskip

\noindent Input: A graph $G$, a function $\ell:V(G)\rightarrow \mathbb{Z}_{\geq 0}$.

\noindent Question: Is there a best-balanced orientation $\vec{G}$ of $G$ such that $d_{\vec{G}}^+(v)\leq \ell(v)$ for all $v \in V(G)$?
\medskip

Observe that any orientation obtained from a well-balanced (best-balanced) orientation by reversing the orientation of all arcs is again well-balanced (best-balanced). Hence imposing lower bounds instead of upper bounds on the out-degrees would lead to equivalent problems. Similarly, the bounds could be imposed on the in-degrees instead of the out-degrees.

The question of the complexity of UBBBO can be found in various sources. It is mentioned by Frank in \cite{book}, by Bern\' ath et al. in \cite{bikks} and there is an online posting on it in the open problem collection of the Egerv\' ary Research group \cite{egreswell}. The contribution of this article is to prove that even these problems involving milder restrictions remain hard. We prove the following two results:
\begin{Theorem}\label{well}
UBWBO is NP-complete.
\end{Theorem}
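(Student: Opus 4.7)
The plan is to show membership in NP and then reduce from a suitable NP-complete problem. Membership in NP is routine: given an orientation $\vec{G}$, the inequalities $d^+_{\vec{G}}(v)\le\ell(v)$ are checked directly, and well-balancedness is verified by comparing $\lambda_{\vec{G}}(u,v)$ with $\lfloor \lambda_G(u,v)/2\rfloor$ for each ordered pair $(u,v)$ using standard max-flow routines. For hardness, a natural starting point is the result of Bern\'ath and Joret~\cite{bj} that deciding whether a mixed graph has a well-balanced orientation is NP-complete, since that problem already concerns well-balanced orientations and only the pre-orientation of a set of distinguished arcs has to be simulated by undirected gadgetry with upper bounds.

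Starting from a mixed graph $M = (V, E\cup A)$, the proposal is to construct an instance $(G', \ell)$ of UBWBO by keeping $V$ and $E$, giving every vertex $v \in V$ a vacuous upper bound $\ell(v) := d_G(v)$, and replacing each arc $(u,v) \in A$ by an undirected \emph{arc gadget} $\Gamma_{uv}$ attached at $u$ and $v$. The requirement on $\Gamma_{uv}$ is that in every well-balanced orientation of $G'$ respecting $\ell$, the edges of $\Gamma_{uv}$ incident with $u$ and $v$ are forced into a unique ``forward'' configuration that delivers the simulated net flow from $u$ to $v$.

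The delicate step, and where I expect the main difficulty, is the design of the arc gadget. An upper bound on the out-degree of an auxiliary vertex can only force its incident edges to be oriented \emph{into} that vertex, never out of it; in particular, the naive gadget consisting of several parallel $u$--$v$ paths of length two with internal upper bound $1$ only forces well-balancedness to split the paths evenly between the two directions, which does not simulate an arc. I expect the working gadget to be asymmetric, built around an internal sink of upper bound $0$ on the $v$-side together with a carefully chosen auxiliary vertex $z$, so that $\lambda_{G'}(v,z)$ is prescribed by the gadget and the only way to saturate $\lambda_{\vec{G'}}(v,z) \ge \lfloor\lambda_{G'}(v,z)/2\rfloor$ without exceeding the tight upper bound at $z$ is to orient the gadget in the forward direction. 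Checking this requires a case analysis of the possible orientations of each gadget edge under the upper-bound constraints.

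Once the gadget is in place, the correctness of the reduction is routine. If $M$ has a well-balanced orientation, orient each $\Gamma_{uv}$ in the canonical forward configuration and verify that the result is a well-balanced orientation of $G'$ respecting $\ell$; conversely, given any well-balanced orientation of $G'$ respecting $\ell$, the forcing property of the gadget recovers a well-balanced orientation of $M$. A small but important technical point is that the gadgets should be attached to $V$ only through $u$ and $v$, so that $\lambda_{G'}(x,y) = \lambda_M(x,y)$ for every pair of original vertices $(x,y)$ and the well-balancedness condition is not accidentally strengthened on the original part of the graph.
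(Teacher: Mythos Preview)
Your proposal is a plan, not a proof: the entire reduction hinges on an ``arc gadget'' that you never actually construct, and the obstacles you yourself identify are not overcome. Two concrete problems. First, an ``internal sink of upper bound $0$'' cannot sit inside a $2$-edge-connected piece of $G'$: if such a vertex $w$ has two neighbours in the same component, then $\lambda_{G'}(w,\cdot)\ge 2$ for some neighbour, so well-balancedness forces $\lambda_{\vec{G'}}(w,\cdot)\ge 1$, which is impossible for a sink. Thus $\ell(w)=0$ only buys you a pendant, which carries no orientation information. Second, your auxiliary vertex $z$ is supposed to have $\lambda_{G'}(v,z)$ ``prescribed by the gadget'', but $v$ is an original vertex with unknown further incidences, so cuts separating $v$ from $z$ may run through $u$ and the rest of $M$; the value $\lambda_{G'}(v,z)$ then depends on the instance, and the forcing argument is no longer local to the gadget. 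Relatedly, the assertion $\lambda_{G'}(x,y)=\lambda_M(x,y)$ for original pairs requires the gadget to contribute exactly one unit of edge-connectivity across $\{u,v\}$, but as you note, a single bridge cannot be forced by well-balancedness since $\lfloor 1/2\rfloor=0$. You have correctly located the difficulty and then stopped.

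The paper does not attempt to simulate arcs at all. It reduces directly from \textsc{Cubic Vertex Cover}: for an instance $(H,k)$ it builds an undirected graph $G$ consisting of a vertex gadget $G^v$ for each $v\in V(H)$, an edge gadget $G^e$ for each $e\in E(H)$, and two global hubs $a,b$; the only nontrivial upper bounds are $\ell(z)=1$ on certain degree-$3$ vertices $z$ and $\ell(a)=8n+k$ on the hub $a$. The argument first shows that $\lambda_G(s,a)=d_G(s)$ for every $s$, so well-balancedness reduces to prescribed arc-connectivities to and from $a$. It then proves that any $\ell$-bounded well-balanced orientation can be converted, by reversing suitable Eulerian subdigraphs, into a \emph{convenient} one in which most of $G$ is rigidly oriented; in a convenient orientation, well-balancedness at each $y^e$ forces, for every edge $e=uv$ of $H$, at least one of $u,v$ to have both arcs $ap_0^\cdot,aq_0^\cdot$ present, and counting these arcs against $\ell(a)$ yields a vertex cover of size at most $k$. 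The key idea you are missing is that the degree constraint does its work \emph{globally} at a single hub, not locally inside per-arc gadgets.
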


\begin{Theorem}\label{best}
UBBBO is NP-complete.
\end{Theorem}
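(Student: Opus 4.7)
The plan is to reduce from UBWBO, which is NP-complete by Theorem~\ref{well}. Membership of UBBBO in NP is routine: given an orientation, one can verify that it is best-balanced by directly checking the degree conditions and computing $\lambda_{\vec{G}}(u,v)$ for each pair of vertices via max-flow.

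For the hardness direction, given an instance $(G,\ell)$ of UBWBO (where one may assume $0\le\ell(v)\le d_G(v)$, since otherwise the constraint at $v$ is vacuous or infeasible), I will build an instance $(G',\ell')$ of UBBBO by attaching to each $v\in V(G)$ a bundle of $d_G(v)$ pendant paths of length~$2$. Concretely, for each $v\in V(G)$ and each $i=1,\dots,d_G(v)$, I introduce two new vertices $a_i^v$ and $b_i^v$ and two new edges $va_i^v$ and $a_i^vb_i^v$. I will then set $\ell'(v)=d_G(v)$, $\ell'(a_i^v)=1$, and $\ell'(b_i^v)=0$ for exactly $d_G(v)-\ell(v)$ of the paths at $v$ (call these the \emph{forced} paths), with $\ell'(b_i^v)=1$ for the remaining $\ell(v)$ paths (the \emph{free} ones).

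The key structural observation is that every edge $va_i^v$ is a bridge in $G'$, so one can verify $\lambda_{G'}(u,w)=\lambda_G(u,w)$ for all $u,w\in V(G)$ while $\lambda_{G'}(x,y)\le 1$ whenever $x$ or $y$ is a gadget vertex. This will reduce the well-balanced condition on $G'$ restricted to $V(G)\times V(G)$ to the well-balanced condition on $G$, and make it automatic on all other pairs. Moreover, $d_{G'}(v)=2d_G(v)$ is even, so the degree part of best-balancedness forces $d^+_{G'}(v)=d_G(v)$ exactly; the condition at each $a_i^v$ forces $d^+(a_i^v)=1$, leaving only two valid path orientations, one with $d^+(b_i^v)=0$ contributing $1$ to $d^+_{G'}(v)$ and one with $d^+(b_i^v)=1$ contributing $0$. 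Thus $\ell'(b_i^v)=0$ pins a forced path to contribute~$1$, while each free path contributes $0$ or $1$ independently. The total contribution of the paths at $v$ will therefore range over $\{d_G(v)-\ell(v),\dots,d_G(v)\}$, and combined with $d^+_{G'}(v)=d_G(v)$ this translates into $d^+_G(v)\in\{0,\dots,\ell(v)\}$, precisely recovering the UBWBO upper bound.

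The main step requiring care will be the two-way correspondence between solutions: showing that every best-balanced orientation of $G'$ respecting $\ell'$ restricts to a well-balanced orientation of $G$ satisfying $\ell$, and conversely that every such orientation of $G$ extends to a best-balanced orientation of $G'$ satisfying $\ell'$. Both directions should follow from the gadget analysis above, using that the free paths provide exactly the degree of freedom needed to realise any $d^+_G(v)\in\{0,\dots,\ell(v)\}$ and that the pendant structure of the gadgets preserves all relevant edge-connectivities between original vertices. The rest is routine bookkeeping on the degrees and bounds.
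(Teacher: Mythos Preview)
Your proof is correct and takes a genuinely different route from the paper. The paper does not give a black-box reduction from UBWBO to UBBBO; instead it reuses the specific instance $(G,\ell)$ built from the CVC instance $(H,k)$, attaching $2k$ pendant vertices to the single special vertex~$a$ and dropping the explicit upper bound there. Since $d_{G'}(a)=16n+2k$, the best-balanced degree condition alone then forces $d^+_{\vec{G'}}(a)\le 8n+k=\ell(a)$, so the only nontrivial bound in~$\ell$ is recovered automatically. The delicate point in the paper's argument is the reverse direction: to extend an $\ell$-bounded well-balanced~$\vec{G}$ to a best-balanced~$\vec{G'}$ one needs $d^+_{\vec{G}}(a)\ge 8n$, and the paper secures this lower bound by first passing to a \emph{convenient} orientation via Lemma~\ref{sconv}, a step that depends on the internal structure of the CVC gadget.

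Your approach, by contrast, is a fully generic polynomial reduction UBWBO~$\to$~UBBBO that works for an arbitrary instance $(G,\ell)$: the pendant length-$2$ paths double every degree, and the mix of forced and free paths at each~$v$ supplies exactly the slack needed to pad the out-degree in~$G'$ up to $d_G(v)$ for any value of $d^+_{\vec{G}}(v)\in\{0,\dots,\ell(v)\}$. This is cleaner and more modular, since it decouples the hardness of UBBBO from the CVC-specific gadgetry; the paper's version, on the other hand, produces a smaller instance and stays closer to the original construction.
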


Observe that Theorem \ref{well} implies Theorem \ref{uplow}. Theorems \ref{well} and \ref{best} can be considered yet another indication of the isolated position that Theorem \ref{nwfort} has in the theory of graph orientations.
\medskip

After a collection of formal definitions and preliminary results in Section \ref{prel}, we prove Theorems \ref{well} and \ref{best} in Section \ref{redu} using a reduction from Cubic Vertex Cover. While our reduction is inspired by the one used in \cite{bikks} to prove Theorem \ref{uplow}, it is more involved.

\section{Preliminaries}\label{prel}

This section is dedicated to providing the background for the proof of the main results in Section \ref{redu}. We first define all important terms in Section \ref{defin} and then give some preliminary results in Section \ref{vorb}.

\subsection{Definitions}\label{defin}

We first give some basic notions of graph theory. A {\it mixed graph} $F$ consists of a vertex set $V(F)$, an edge set $E(F)$, and an arc set $A(F)$. We also say that $F$ {\it contains} $V(F),E(F)$ and $A(F)$. An edge $e=uv \in E(F)$ is a set containing the vertices $u$ and $v$. We say that $e$ {\it links} $u$ and $v$ and $e$ is {\it incident} to $u$ and $v$. More generally, we say that $e$ {\it links} two disjoint sets $X,Y \subseteq V(F)$ if $u \in X$ and $v \in Y$. If $e$ links $X$ and $V(F)-X$, we say that $e$ {\it enters} $X$. An arc $a=uv \in A(F)$ is an ordered tuple of the vertices $u,v \in V(F)$ where $u$ is called the {\it tail} of $a$ and $v$ is called the {\it head} of $a$.  For some $X \subseteq V(F)$ with $u \in X$ and $v \in V(F)-X$, we say that $e$ {\it enters} $V(F)-X$ and {\it leaves} $X$. For some $e=uv \in E(F) \cup A(F)$, we say that $u$ and $v$ are the {\it endvertices} of $e$. A {\it mixed subgraph} $F'$ of $F$ is a mixed graph $F'$ with $V(F')\subseteq V(F), E(F') \subseteq E(F)$, and $A(F') \subseteq A(F)$. For some $X \subseteq V(F)$, we let $F[X]$ denote the mixed subgraph of $F$ whose vertex set is $X$ and that contains all the edges in $E(F)$ and all the arcs in $A(F)$ whose both endvertices are in $X$.

A mixed graph $G$ without arcs is called a {\it graph}. For a graph $G$ and some $X \subseteq V(G)$, we let $d_G(X)$ denote the number of edges in $E(G)$ that have exactly one endvertex in $X$ and we let $i_G(X)$ denote the number of edges in $E(G)$ that have both endvertices in $X$. For a single vertex $v \in V(G)$, we abbreviate $d_G(\{v\})$ to $d_G(v)$ and call this number the {\it degree} of $v$ in $G$.  If $d_G(v)=3$ for all $v \in V(G)$, we say that $G$ is {\it cubic}. For two vertices $u,v \in V(G)$, we use $\lambda_G(u,v)$ for $\min_{u \in X \subseteq V(G)-v}d_G(X)$. Observe that $\lambda_G(u,v) =\lambda_G(v,u)$. For some positive integer $k$, we say that $G$ is {\it $k$-edge-connected} if $\lambda_G(u,v)\geq k$ for all $u,v \in V(G)$. 
 A 1-edge-connected graph which contains two vertices $u,v$ of degree 1 and in which all other vertices are of degree 2 is called a {\it $uv$-path}. We also say that $u$ and $v$ are the {\it endvertices} of the path. Two graphs whose edge sets are disjoint are called {\it edge-disjoint}. For two paths $T_1,T_2$ with $V(T_1)\cap V(T_2)=x$ for a vertex $x$ that is an endvertex of both $T_1$ and $T_2$, we denote by $T_1T_2$ the path with $V(T_1T_2)=V(T_1)\cup V(T_2)$ and $E(T_1T_2)=E(T_1)\cup E(T_2)$.

A mixed graph $D$ without edges is called a {\it digraph}. For a digraph $D$ and some $X \subseteq V(D)$, we let $d_D^+(X)$ denote the number of arcs whose tail is in $X$ and whose head is in $V(D)-X$. We use $d_D^-(X)$ for $d_D^+(V(D)-X)$.  For a single vertex $v \in V(D)$, we abbreviate $d_D^+(\{v\})(d_D^-(\{v\}))$ to $d_D^+(v)(d_D^-(v))$ and call this number the {\it out-degree (in-degree)} of $v$ in $D$. If $d_D^+(v)=d_D^-(v)$ for all $v \in V(D)$, we say that $D$ is {\it eulerian}. Given a function $\ell:V(D)\rightarrow \mathbb{Z}_{\geq 0}$, we say that $D$ is {\it $\ell$-bounded} if $d_D^+(v)\leq \ell(v)$ for all $v \in V(D)$. For two vertices $u,v \in V(D)$, we use $\lambda_D(u,v)$ for $\min_{v \in X \subseteq V(D)-u}d_D^-(X)$. For some positive integer $k$, we say that $D$ is {\it $k$-arc-connected} if $\lambda_D(u,v)\geq k$ for all $(u,v) \in V(D)\times V(D)$. We abbreviate 1-arc-connected to {\it strongly connected.} The operation of exchanging the head and the tail of an arc is called {\it reversing} the arc. Two digraphs whose arc sets are disjoint are called {\it arc-disjoint}.

A mixed graph $F'$ is called a {\it partial orientation} of another mixed graph $F$ if $F'$ can be obtained from $F$ by replacing some of the edges in $E(F)$ by an arc with the same two endvertices. This operation is called {\it orienting} the edge. If $F'$ is a digraph, then $F'$ is called an {\it orientation} of $F$. The unique graph $G$ such that $F$ is an orientation of $G$ is called the {\it underlying graph} of $F$. A strongly connected orientation of a graph all of whose vertices are of degree 2 is called a {\it circuit}. An orientation $T$ of a $uv$-path with $\lambda_T(u,v)=1$ is called a {\it directed $uv$-path.} We say that an orientation $\vec{G}$ of a graph $G$ is {\it well-balanced} if $\lambda_{\vec{G}}(u,v)\geq \lfloor \frac{\lambda_{G}(u,v)}{2}\rfloor $ for all $(u,v) \in V(G)\times V(G)$. If additionally $d_{\vec{G}}^+(v) \in \{\lfloor \frac{d_G(v)}{2}\rfloor,\lceil \frac{d_G(v)}{2}\rceil\}$ holds for all $v \in V(G)$, then $\vec{G}$ is called {\it best-balanced}. We also say that a digraph is {\it well-balanced (best-balanced)} if it is a well-balanced (best-balanced) orientation of its underlying graph.
\medskip

For basic notions of complexity theory, see \cite{gj}. Given a graph $H$, a {\it vertex cover} of $H$ is a subset $U$ of $V(H)$ such that every $e \in E(H)$ is incident to at least one vertex in $U$. We consider the following algorithmic problem:
\medskip

\noindent {\bf Cubic Vertex Cover (CVC):}
\smallskip

\noindent Input: A cubic graph $H$, a positive integer $k$.

\noindent Question: Is there a vertex cover of $H$ of size at most $k$?

\subsection{Preliminary results}\label{vorb}
For proving the correctness of our reduction, we need a few preliminaries. 

The following classic results are due to Menger \cite{men} and fundamental to graph connectivity.

\begin{Theorem}\label{mengerun}
Let $G$ be a graph and $s_1,s_2 \in V(G)$. Then the maximum number of pairwise edge-disjoint $s_1s_2$-paths  in $G$ is $\lambda_G(s_1,s_2)$.
\end{Theorem}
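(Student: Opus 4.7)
The plan is to prove the two inequalities separately, with the nontrivial lower bound obtained by reducing to the integral max-flow/min-cut theorem. For the easy upper bound, I would consider any family $P_1,\ldots,P_t$ of pairwise edge-disjoint $s_1s_2$-paths in $G$ and any $X \subseteq V(G)$ with $s_1 \in X$ and $s_2 \in V(G)-X$. Each $P_i$ must contain at least one edge of $G$ entering $X$, and these edges are pairwise distinct because the paths are edge-disjoint, so $t \leq d_G(X)$. Minimizing over $X$ gives $t \leq \lambda_G(s_1,s_2)$.

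For the lower bound, I would construct a digraph $D$ on vertex set $V(G)$ by replacing each edge $uv \in E(G)$ by the two arcs $(u,v)$ and $(v,u)$, and assign capacity $1$ to every arc. For any $X$ with $s_1 \in X$ and $s_2 \notin X$, the arcs of $D$ leaving $X$ are in bijection with the edges of $G$ entering $X$, so the minimum $s_1s_2$-cut in $D$ has value exactly $\lambda_G(s_1,s_2)$. The integral max-flow/min-cut theorem then provides an integer $s_1s_2$-flow $f$ in $D$ of value $\lambda_G(s_1,s_2)$, where every arc carries flow $0$ or $1$.

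The remaining task is to extract $\lambda_G(s_1,s_2)$ pairwise edge-disjoint $s_1s_2$-paths in $G$ from $f$. I would choose $f$ of minimum total value among all such integer flows. Then no edge $uv \in E(G)$ can have both of its arcs $(u,v)$ and $(v,u)$ carrying flow, since lowering the flow by $1$ on each of them preserves flow conservation at $u$ and $v$ and leaves the value unchanged while decreasing the total, contradicting minimality. Hence for every edge of $G$, at most one of its two arcs is used by $f$. A standard integer flow decomposition now writes $f$ as a sum of $\lambda_G(s_1,s_2)$ arc-disjoint simple directed $s_1s_2$-paths in $D$ together with some directed cycles; discarding the cycles and interpreting the directed paths as undirected paths in $G$ yields, by the anti-parallel property above, pairwise edge-disjoint $s_1s_2$-paths in $G$.

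The only step that requires mild care is the cancellation of anti-parallel arcs, and this is an immediate consequence of flow conservation at the two endvertices of the edge in question; everything else is either a direct counting argument on cuts or a standard invocation of max-flow/min-cut and integer flow decomposition. I therefore expect no substantial obstacle.
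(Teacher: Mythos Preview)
Your argument is correct and is one of the standard textbook derivations of the undirected edge version of Menger's theorem, via the integral max-flow/min-cut theorem together with flow decomposition and cancellation of anti-parallel arcs. However, the paper does not prove this statement at all: it is listed among the preliminaries as a classical result due to Menger and is simply cited without proof. There is therefore no proof in the paper to compare your proposal against; your write-up would serve as a self-contained argument where the paper relies on a reference.
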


The second result is the directed analogue of Theorem \ref{mengerun}.

\begin{Theorem}\label{menger}
Let $D$ be a digraph and $s_1,s_2 \in V(D)$. Then the maximum number of pairwise arc-disjoint directed $s_1s_2$-paths  in $D$ is $\lambda_D(s_1,s_2)$.
\end{Theorem}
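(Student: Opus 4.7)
My plan is to prove the two inequalities separately. For the upper bound $\mu \leq \lambda_D(s_1,s_2)$, where $\mu$ denotes the maximum number of pairwise arc-disjoint directed $s_1s_2$-paths in $D$, fix any set $X$ with $s_2 \in X \subseteq V(D)-s_1$. Every directed $s_1s_2$-path starts at $s_1 \notin X$ and ends at $s_2 \in X$, so it contains at least one arc counted by $d_D^-(X)$; across arc-disjoint paths these arcs are distinct, yielding $\mu \leq d_D^-(X)$. Taking the minimum over admissible $X$ gives $\mu \leq \lambda_D(s_1,s_2)$.

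For the harder inequality $\lambda_D(s_1,s_2) \leq \mu$, I would use the standard augmenting-path argument from the unit-capacity max-flow min-cut theorem. Let $\mathcal{P}$ be a maximum collection of $\mu$ pairwise arc-disjoint directed $s_1s_2$-paths in $D$, and form the residual digraph $D_{\mathcal{P}}$ by reversing in $D$ every arc used by some path of $\mathcal{P}$. If there existed a directed $s_1s_2$-path $Q$ in $D_{\mathcal{P}}$, then interpreting the arcs of $Q$ as modifications to $\mathcal{P}$ (forward arcs added, reversed arcs removed) would produce an arc set with net out-flow $\mu+1$ at $s_1$, net in-flow $\mu+1$ at $s_2$, and balance elsewhere, which decomposes into $\mu+1$ arc-disjoint directed $s_1s_2$-paths plus possibly some arc-disjoint directed circuits to be discarded, contradicting the maximality of $\mathcal{P}$. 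Hence no such $Q$ exists. Letting $R$ be the set of vertices reachable from $s_1$ in $D_{\mathcal{P}}$, the set $X := V(D)-R$ satisfies $s_1 \notin X$ and $s_2 \in X$; by closure of $R$ under out-arcs in $D_{\mathcal{P}}$, every arc of $D$ entering $X$ must lie in $\mathcal{P}$ (else it would persist in $D_{\mathcal{P}}$), while no arc of $\mathcal{P}$ leaves $X$ (else its reverse would provide an exit from $R$). Counting the crossings of $X$ by the paths in $\mathcal{P}$, each contributes a net of one entry into $X$, so $d_D^-(X) - 0 = \mu$, which gives $\lambda_D(s_1,s_2) \leq d_D^-(X) = \mu$.

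The main obstacle lies in the decomposition step after augmentation: one must rigorously verify that the symmetric-difference arc set is indeed an edge-disjoint union of $\mu+1$ directed $s_1s_2$-paths together with some directed circuits. This follows from a greedy path-extraction procedure starting at $s_1$ and repeatedly using flow balance at internal vertices, but it is the one point that requires careful bookkeeping, in particular to handle situations where an arc of $\mathcal{P}$ and its reverse in $Q$ appear together and must be cancelled before path extraction.
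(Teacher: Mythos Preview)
Your proof is correct; it is the standard augmenting-path/max-flow argument for the directed edge version of Menger's theorem, and the bookkeeping you flag in the last paragraph is genuinely the only delicate point, handled exactly as you indicate via flow decomposition after cancellation of opposing arc pairs.

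Note, however, that the paper does not supply its own proof of this statement at all: Theorem~\ref{menger} is quoted as a classical result of Menger \cite{men} and used as a black box throughout Section~\ref{redu}. So there is nothing to compare your argument against; you have proved more than the paper attempts to, and your approach is the textbook one.
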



The next result is helpful when proving that a given orientation is well-balanced.

\begin{Proposition}\label{prel3}
Let $G$ be a graph and $a \in V(G)$. Let $\vec{G}$ be an orientation of $G$ such that $\lambda_{\vec{G}}(a,s)\geq \lfloor \frac{d_G(s)}{2}\rfloor$ and $\lambda_{\vec{G}}(s,a)\geq \lfloor \frac{d_G(s)}{2}\rfloor$ hold for all $s \in V(G)-a$. Then $\vec{G}$ is well-balanced.
\end{Proposition}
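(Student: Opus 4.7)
The plan is to reduce the global well-balancedness condition to the two one-sided connectivity conditions involving $a$, by analyzing an arbitrary out-cut in $\vec{G}$ based on which side the vertex $a$ lies on. Fix arbitrary $u,v \in V(G)$ and set $k=\lambda_G(u,v)$; the goal is to show $\lambda_{\vec{G}}(u,v)\geq \lfloor k/2\rfloor$. I will use the key trivial bound $k\leq \min\{d_G(u),d_G(v)\}$, which is what lets the hypotheses (phrased in terms of $d_G(s)$) actually imply a statement about $\lambda_G(u,v)$.

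First I would dispose of the degenerate cases. If $u=v$ there is nothing to prove. If $u=a$, then by hypothesis $\lambda_{\vec{G}}(a,v)\geq \lfloor d_G(v)/2\rfloor \geq \lfloor k/2\rfloor$, and symmetrically if $v=a$. So assume $u,v \in V(G)\setminus\{a\}$ are distinct.

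The main step is a case analysis on an arbitrary out-cut. Let $X\subseteq V(G)\setminus\{v\}$ with $u\in X$; by the definition of $\lambda_{\vec{G}}$ it suffices to prove $d_{\vec{G}}^+(X)\geq \lfloor k/2\rfloor$. If $a\in X$, then $X$ also witnesses a cut for the ordered pair $(a,v)$ (it contains $a$ and avoids $v$), so the hypothesis gives
\[
d_{\vec{G}}^+(X)\ \geq\ \lambda_{\vec{G}}(a,v)\ \geq\ \left\lfloor \tfrac{d_G(v)}{2}\right\rfloor\ \geq\ \left\lfloor \tfrac{k}{2}\right\rfloor,
\]
using $k\leq d_G(v)$. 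If instead $a\notin X$, then $X$ witnesses a cut for $(u,a)$ (it contains $u$ and avoids $a$), so
\[
d_{\vec{G}}^+(X)\ \geq\ \lambda_{\vec{G}}(u,a)\ \geq\ \left\lfloor \tfrac{d_G(u)}{2}\right\rfloor\ \geq\ \left\lfloor \tfrac{k}{2}\right\rfloor,
\]
using $k\leq d_G(u)$. Taking the minimum over all such $X$ yields $\lambda_{\vec{G}}(u,v)\geq \lfloor \lambda_G(u,v)/2\rfloor$, which is the defining property of a well-balanced orientation.

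I do not anticipate any real obstacle here; the proposition is essentially a bookkeeping lemma. The only subtle point is making sure one uses the correct definition of $\lambda_{\vec{G}}$ (minimum in-degree over sets containing the head and avoiding the tail, equivalently minimum out-degree over sets containing the tail and avoiding the head), so that an arbitrary $(u,v)$-separator $X$ really does serve as either an $(a,v)$- or $(u,a)$-separator depending on the location of $a$. Everything else reduces to the elementary inequality $\lambda_G(u,v)\leq \min\{d_G(u),d_G(v)\}$.
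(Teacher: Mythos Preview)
Your proof is correct and follows essentially the same approach as the paper: both argue by taking an arbitrary cut separating the two vertices and doing a case split on which side $a$ lies, then invoking the hypothesis together with the trivial bound $\lambda_G(u,v)\le\min\{d_G(u),d_G(v)\}$. The only cosmetic difference is that the paper works with $d_{\vec{G}}^-(R)$ for $R\ni s_2$ (as in its definition of $\lambda$) rather than $d_{\vec{G}}^+(X)$ for $X\ni u$, and handles the cases $u=a$ or $v=a$ implicitly within the same dichotomy rather than separately.
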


\begin{proof}
Let $s_1,s_2 \in V(G)$ and $R \subseteq V(G)-s_1$ with $s_2 \in R$. If $a \in R$, we have $d_{\vec{G}}^-(R)\geq \lambda_{\vec{G}}(s_1,a)\geq \lfloor\frac{d_G(s_1)}{2} \rfloor\geq \lfloor \frac{\lambda_{G}(s_1,s_2)}{2}\rfloor$. If $a \in V(G)- R$, we have $d_{\vec{G}}^-(R)\geq \lambda_{\vec{G}}(a,s_2)\geq \lfloor\frac{d_G(s_2)}{2} \rfloor\geq \lfloor \frac{\lambda_{G}(s_1,s_2)}{2}\rfloor$. In either case, we obtain $d_{\vec{G}}^-(R)\geq \lfloor \frac{\lambda_{G}(s_1,s_2)}{2}\rfloor$, so $\lambda_{\vec{G}}(s_1,s_2)\geq \lfloor \frac{\lambda_{G}(s_1,s_2)}{2}\rfloor$. Hence $\vec{G}$ is well-balanced.
\end{proof}



The next simple result allows to modify orientations maintaining important properties.

\begin{Proposition}\label{prel5}
Let $G$ be a graph, $\ell: V(G)\rightarrow \mathbb{Z}_{\geq 0}$ a function, $\vec{G}_0$ an $\ell$-bounded, well-balanced orientation of $G$, $D$ an eulerian directed subgraph of $\vec{G}_0$ and $\vec{G}_1$  the orientation of $G$ which is obtained by reversing all the arcs of $D$. Then $\vec{G}_1$ is $\ell$-bounded and well-balanced.
\end{Proposition}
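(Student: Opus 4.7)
The plan is to observe that reversing an eulerian directed subgraph changes neither the out-degree at any vertex nor the number of arcs entering (or leaving) any vertex subset, so both properties of interest transfer immediately from $\vec{G}_0$ to $\vec{G}_1$.

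First I would handle $\ell$-boundedness. Fix $v \in V(G)$. Since $D$ is eulerian, $d_D^+(v)=d_D^-(v)$. Reversing the arcs of $D$ turns the $d_D^+(v)$ arcs leaving $v$ in $D$ into arcs entering $v$, and the $d_D^-(v)$ arcs entering $v$ in $D$ into arcs leaving $v$. Hence
\[
d_{\vec{G}_1}^+(v)=d_{\vec{G}_0}^+(v)-d_D^+(v)+d_D^-(v)=d_{\vec{G}_0}^+(v)\leq \ell(v),
\]
so $\vec{G}_1$ is $\ell$-bounded.

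Next I would verify that $\vec{G}_1$ is well-balanced. The same cancellation argument applied to an arbitrary subset $X \subseteq V(G)$ (using the eulerian property $d_D^+(X)=d_D^-(X)$) gives $d_{\vec{G}_1}^-(X)=d_{\vec{G}_0}^-(X)$. Taking the minimum over all $X\subseteq V(G)-s_1$ with $s_2 \in X$ yields $\lambda_{\vec{G}_1}(s_1,s_2)=\lambda_{\vec{G}_0}(s_1,s_2)$ for every ordered pair $(s_1,s_2)$. Since $\vec{G}_0$ is well-balanced, we get
\[
\lambda_{\vec{G}_1}(s_1,s_2)=\lambda_{\vec{G}_0}(s_1,s_2)\geq \left\lfloor \tfrac{\lambda_G(s_1,s_2)}{2}\right\rfloor,
\]
and $\vec{G}_1$ is well-balanced as required.

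There is no real obstacle here; the only thing to check carefully is the degree bookkeeping at a single vertex and across a single cut, and in both cases the eulerian identity $d_D^+=d_D^-$ makes the correction terms cancel. The statement is essentially a restatement of the fact that reversing a circulation preserves the underlying cut structure of an orientation.
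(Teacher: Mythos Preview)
Your proof is correct and follows essentially the same approach as the paper: preserve out-degrees at vertices via the eulerian identity to get $\ell$-boundedness, then preserve in-degrees across every cut to get $\lambda_{\vec{G}_1}=\lambda_{\vec{G}_0}$ and hence well-balancedness. The only cosmetic difference is that the paper derives $d_{\vec{G}_1}^-(R)=d_{\vec{G}_0}^-(R)$ by summing the preserved vertex in-degrees and subtracting $i_G(R)$, whereas you invoke the cut identity $d_D^+(X)=d_D^-(X)$ directly; both are immediate consequences of $D$ being eulerian.
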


\begin{proof}
Since $D$ is eulerian, we have $d_{\vec{G}_1}^+(s)=d_{\vec{G}_0}^+(s)$ for all $s \in V(G)$. Hence, as $\vec{G}_0$ is $\ell$-bounded, so is $\vec{G}_1$. Similarly, we have $d_{\vec{G}_1}^-(s)=d_{\vec{G}_0}^-(s)$ for all $s \in V(G)$.
We hence have 
$d_{\vec{G}_1}^-(R)=\sum_{s \in R}d_{\vec{G}_1}^-(s)-i_G(R)=\sum_{s \in R}d_{\vec{G}_0}^-(s)-i_G(R)=d_{\vec{G}_0}^-(R)$ for all $R \subseteq V$. Hence $\lambda_{\vec{G}_1}(s_1,s_2)=\min_{s_2\in R \subseteq V-s_1}d_{\vec{G}_1}^-(R)=\min_{s_2\in R \subseteq V-s_1}d_{\vec{G}_0}^-(R)=\lambda_{\vec{G}_0}(s_1,s_2)$ for all $(s_1,s_2) \in V(G)\times V(G)$.
Thus, as $\vec{G}_0$ is well-balanced, so is $\vec{G}_1$.
\end{proof}

Finally, we need the following result to justify the usefulness of our reduction. It can be found in \cite{gj}.

\begin{Theorem}\label{vcdure}
Cubic Vertex Cover is NP-complete.
\end{Theorem}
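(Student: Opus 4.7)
The statement asserts that Cubic Vertex Cover (CVC) is NP-complete. My plan consists of establishing membership in NP and then giving a polynomial-time reduction from a known NP-hard problem; the most natural choice is the classical Vertex Cover problem, whose NP-completeness is a consequence of Karp's reduction from 3-SAT.

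Membership in NP is immediate: given a candidate subset $U \subseteq V(H)$, one checks in polynomial time that $|U| \leq k$ and that every $e \in E(H)$ is incident to at least one vertex of $U$.

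For NP-hardness, I would reduce from Vertex Cover in two stages. In the first stage, given an instance $(G,k)$, I cap the maximum degree at $3$ by replacing every vertex $v$ with $d_G(v) \geq 4$ by a cycle $C_v$ of length $d_G(v)$ and distributing the edges formerly incident to $v$ among the vertices of $C_v$, one per cycle vertex. The change in minimum vertex cover size can be recorded as an explicit additive function of the degree sequence of $G$, since any optimal cover of $G$ extends to one of the modified graph by taking $\lceil d_G(v)/2 \rceil - 1$ or $\lceil d_G(v)/2 \rceil$ additional vertices from $C_v$ depending on whether the original $v$ was in the cover or not. In the second stage, I raise the minimum degree to $3$: for each vertex $w$ of the resulting graph still having degree $1$ or $2$, I attach a small cubic ``completion gadget'' $\Gamma_w$ with the properties that (a) after attachment $w$ has degree exactly $3$, (b) every new vertex of $\Gamma_w$ has degree exactly $3$ in the combined graph, and (c) an optimal vertex cover of $\Gamma_w$ contributes a fixed number of vertices, independent of whether $w$ itself is taken into the global cover. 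A small gadget such as a copy of $K_4$ minus one edge (attaching its two degree-$2$ vertices appropriately) or a carefully tailored six-vertex fragment can serve this purpose.

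The main technical obstacle is the design and verification of the completion gadget so that property (c) genuinely holds: one must argue that no ``shortcut'' in the gadget becomes available when $w$ is forced into the cover, so that the reduction is parsimonious and the target threshold $k' = k + \text{(computable constant)}$ is correct. This amounts to a finite case analysis on the gadget, and once it is settled, correctness and polynomial running time of the overall reduction are straightforward. The two-stage structure isolates this difficulty cleanly from the otherwise routine cycle replacement of the first stage.
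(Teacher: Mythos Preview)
The paper does not actually prove this theorem: it is quoted as a known result and attributed to Garey and Johnson \cite{gj}, with no argument given. So there is nothing on the paper's side to compare your sketch against; anything you write here already exceeds what the authors do.

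That said, as a standalone sketch your two-stage reduction has genuine gaps in both stages. In Stage~1, the cycle replacement does not give the offset you state. If $v$ lies in an optimal cover of $G$, the $d_G(v)$ external edges were covered by $v$ alone; after replacing $v$ by a $d_G(v)$-cycle and distributing one external edge per cycle vertex, those external edges must now each be covered from the cycle side, which can force all $d_G(v)$ cycle vertices into the cover, not $\lceil d_G(v)/2\rceil -1$. The classical reduction to maximum degree~$3$ (Garey--Johnson--Stockmeyer) uses a different vertex substitute precisely to avoid this blow-up, and the correct additive shift is $d_G(v)-1$ per replaced vertex, not what you wrote. In Stage~2, your suggested gadget $K_4$ minus an edge, attached at its two degree-$2$ vertices $a,b$ to a degree-$1$ vertex $w$, fails property~(c): if $w$ is in the cover one needs only $\{c,d\}$ inside the gadget, while if $w$ is not in the cover one is forced to take $\{a,b\}$ and then still one of $c,d$, so the contribution is $2$ versus $3$. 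No finite case analysis will fix this particular gadget; you would need a gadget in which the attachment vertices are forced into \emph{every} minimum cover of the gadget's internal edges, and you have not exhibited one.

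In short: the outline (reduce from general Vertex Cover, cap the degree, then pad to $3$-regular) is the standard one, but neither stage works with the specific gadgets you propose, and this is exactly why the paper defers to the literature rather than reproving the result.
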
 

\section{The reduction}\label{redu}

In this section, we give the reduction we need to prove Theorems \ref{well} and \ref{best}. We first give a reduction for Theorem \ref{well} and then show how to adapt it to prove Theorem \ref{best}. In Section \ref{def}, we describe the instance $(G,\ell)$ of UBWBO we create from a given instance $(H,k)$ of CVC. In the remaining part of the paper $(H,k)$ and $(G,\ell)$ are fixed. In Section \ref{convor}, we describe a particular kind of orientations, called convenient orientations that play a crucial role in the proof of the reduction. In Section \ref{vcwell}, we give the first direction of the reduction showing how to obtain an $\ell$-bounded, well-balanced orientation of $G$ from a vertex cover of $H$. The other direction is divided in two parts. First, we show in Section \ref{faireconv} how an $\ell$-bounded, well-balanced orientation of $G$ can be turned into one that additionally has the property of being convenient. After, in Section \ref{tocov}, we show how an orientation with this extra property yields a vertex cover of $H$. In Section \ref{bbo}, we show how to adapt our construction for the proof of Theorem \ref{best}. Finally, in Section \ref{conc}, we conclude our proof.

\subsection{The construction}\label{def}

We here show how to create an instance of UBWBO from an instance of CVC. Let $(H,k)$ be an instance of CVC. Since $H$ is cubic, we have $|V(H)|=2n$ and $|E(H)|=3n$ for some integer $n\ge 2.$

We first describe, for every $v \in V(H)$,  a vertex gadget $G^v$ that contains 6 vertices: $p_0^v, p_1^v, p^v_2,q_0^v,q_1^v, q^v_2$ and 5 edges:  $p_0^vp_1^v,p_1^vp_2^v,q_0^v q_1^v,q_1^v q_2^v, p^v_0q_0^v$. 
We next describe,  for every $e \in E(H)$, an edge gadget $G^{e}$ that contains 6 vertices $x^{e},y^{e},z^{e}_1,z^{e}_2,z^{e}_3,z^{e}_4$
and 5 edges: $x^{e}y^{e},x^{e}z^{e}_1, y^{e}z^{e}_2,y^{e}z^{e}_3, y^{e}z^{e}_4$. 
An illustration of these gadgets can be found in Figure \ref{dad2}.

\begin{figure}[h]\begin{center}
  \includegraphics[]{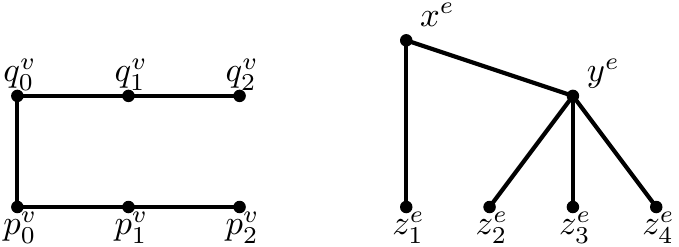} 
  \caption{A vertex gadget for a vertex $v$ and an edge gadget for an edge $e$.}\label{dad2}
\end{center}
\end{figure}

 \medskip

We are now ready to describe $G$. For every $v \in V(H)$, we let $G$ contain a vertex gadget $G^v$ and for every $e \in E(H)$, we let $G$ contain an edge gadget $G^e$. Let $P=\bigcup_{v \in V(H)}V(G^v), X=\bigcup_{e \in E(H)}\{x^{e},y^{e}\}$ and 
$Z=\bigcup_{e \in E(H)}\{z^e_1,z^e_2,z^e_3,z^e_4\}.$
We let $V(G)$ contain two more vertices $a$ and $b$. We now finish the description of $G$ by linking these components by some additional edges. For every $z \in Z$, we let $E(G)$ contain an edge $az$ and an edge $bz$. Further, for every $v \in V(H)$, let $e_1,e_2,e_3$ be an arbitrary ordering of the edges in $E(H)$ which are incident to $v$ in $H$. We add the edges $ap^v_0,aq^v_0,p_1^{v}y^{e_1}, p_2^vy^{e_2}, p_2^vy^{e_3},q_1^{v}x^{e_1}, q_2^vx^{e_2},$ and $q_2^vx^{e_3}$.  This finishes the construction of $G$.

 Observe that $d_G(a)=4|E(H)|+2|V(H)|=16n, d_G(b)=4|E(H)|=12n, d_G(s)=3$ for all $s \in P \cup Z$, and $d_G(x^{e})=4$ and $d_G(y^{e})=6$ for all $e \in E(H)$. An illustration can be found in Figure \ref{dad3}.

\begin{figure}[h]\begin{center}
  \includegraphics[]{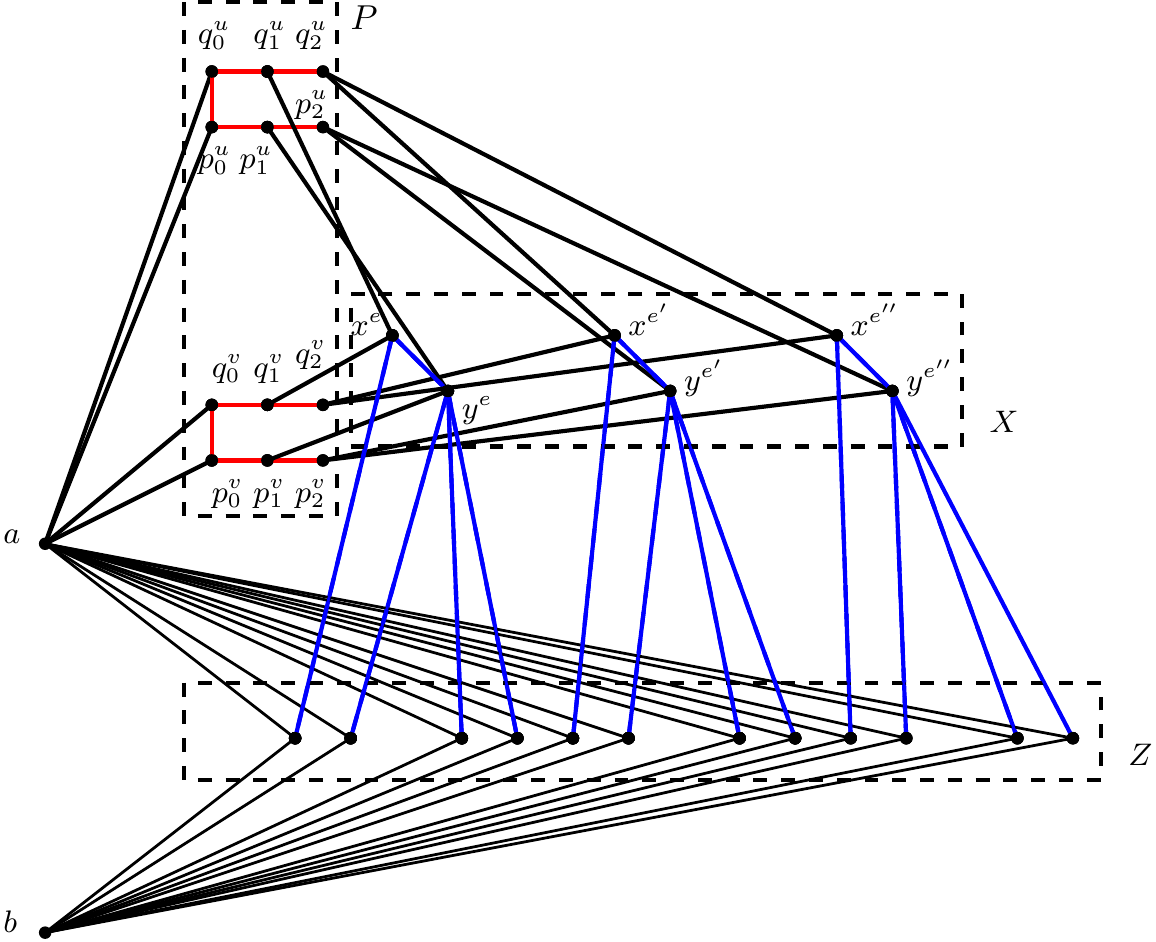} 
  \caption{An example for the graph $G$ created from a graph $H$ where $V(H)$ contains two vertices $u$ and $v$ and $E(H)$ contains three parallel edges $e,e'$, and $e''$ linking $u$ and $v$. All the edges belonging to a vertex gadget are marked in red while all the edges belonging to an edge gadget are marked in blue. The names of the vertices in $Z$ have been omitted due to space restrictions. They are from left to right: $z^{e}_1,z^{e}_2,z^{e}_3,z^{e}_4,z^{e'}_1,z^{e'}_2,z^{e'}_3,z^{e'}_4,z^{e''}_1,z^{e''}_2,z^{e''}_3,z^{e''}_4$. See also Figure \ref{dad2}.}\label{dad3}
\end{center}
\end{figure}

We now define $\ell$. We set $\ell(a)=8n+k$ and $\ell(z)=1$ for all $z \in Z$. For all $s \in V(G)-(Z \cup a)$, we set the trivial bound $\ell(s)=d_G(s)$.
\medskip

We now give an important result on the connectivity properties of $G$.

\begin{Proposition}\label{wbd}
$\lambda_{G}(s,a)=d_G(s)$ for all $s \in V(G)-a$.
\end{Proposition}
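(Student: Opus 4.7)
The bound $\lambda_G(s,a) \leq d_G(s)$ is immediate by taking $R=\{s\}$ in the definition of $\lambda_G(s,a)$, since the $d_G(s)$ edges incident to $s$ form an $sa$-cut. So the content of the proposition is the reverse inequality, for which I would appeal to Menger's theorem (Theorem \ref{mengerun}) and, for each $s \in V(G)-a$, exhibit $d_G(s)$ pairwise edge-disjoint $sa$-paths, one starting with each edge incident to $s$.

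The construction proceeds by case analysis on the location of $s$. For $s=b$, where $d_G(b)=12n$, the $12n$ two-edge paths $b,z,a$ with $z \in Z$ work immediately, since every $z \in Z$ is joined to both $a$ and $b$. For $s \in Z$ with $d_G(s)=3$, the three required paths are the direct edge $sa$, a two-step detour $s,b,z',a$ through some $z' \in Z\setminus\{s\}$ (possible since $|Z|=4|E(H)| \geq 12 > 1$), and a third path entering the edge gadget of $s$ via the edge $sx^e$ or $sy^e$ and leaving it through an unused $z$-vertex of the same gadget to reach $a$.

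For $s$ inside a vertex gadget $G^v$, all six possible choices of $s \in \{p_0^v,p_1^v,p_2^v,q_0^v,q_1^v,q_2^v\}$ must be handled, but the common idea is to use the two edges $ap_0^v$ and $aq_0^v$ and the edges from $p_1^v,p_2^v$ to the $y^{e_i}$-vertices (and analogously on the $q$-side) to route the three paths through three disjoint ``branches'' of the gadget, each ultimately reaching $a$ directly or via a $z$-vertex of an incident edge gadget. For $s=x^e$ (degree $4$), I would take the direct path $x^e,z_1^e,a$, a path $x^e,y^e,z_2^e,a$, and two paths $x^e, q^u, \ldots, q_0^u, a$ and $x^e, q^v, \ldots, q_0^v,a$ through the $q$-branches of the vertex gadgets of the two endvertices $u,v$ of $e$. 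For $s=y^e$ (degree $6$), I would take the three direct paths $y^e,z_i^e,a$ for $i \in \{2,3,4\}$, the path $y^e,x^e,z_1^e,a$, and two paths through the $p$-branches of the vertex gadgets of the two endvertices of $e$, each terminating with the edge $ap_0$.

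The only real work is checking edge-disjointness within each case. This is mechanical rather than deep, since the graph is sparse and every path I listed uses a clearly distinguishable ``first edge'' at $s$, a clearly distinguishable ``last edge'' at $a$, and a unique intermediate gadget structure. The slightly delicate case is $s = y^e$, where six paths must be assembled simultaneously and one must track that the two paths threading through the vertex gadgets of $u$ and $v$ do not collide with each other or with the path through $x^e,z_1^e$; but since these three paths pass through disjoint vertex sets (the two vertex gadgets and the edge gadget of $e$), no conflict arises. I would therefore expect the main obstacle to be notational bookkeeping rather than any substantive combinatorial obstruction.
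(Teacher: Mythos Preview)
Your approach is correct and for $b$, $x^e$, and $y^e$ it coincides word-for-word with the paper's: the same explicit families of paths, appealing to Theorem \ref{mengerun}. The only difference is in the handling of the degree-$3$ vertices in $P \cup Z$. You continue the explicit-paths strategy case by case, which works (your bookkeeping sketch is fine, and the potential collision between the detour $s,b,z',a$ and the third path through the edge gadget is easily avoided since each gadget has four $z$-vertices). The paper instead switches gears here: once $\lambda_G(s,a)\geq 3$ is known for all $s\in X\cup\{b\}$, it argues by contradiction on a minimum cut. If some $t\in P\cup Z$ had $\lambda_G(t,a)<3$, the $a$-side $R$ of a minimum cut would have to contain all of $X\cup\{b\}$; but then every $z\in Z$ has all three neighbours in $R$, forcing $Z\subseteq R$ and hence $t\in P$; finally every vertex of $P$ has three distinct neighbours each linked to $R$, so $d_G(R)\geq 3$, a contradiction. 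This cut argument disposes of all of $P\cup Z$ in one stroke and avoids the six-subcase enumeration inside the vertex gadgets; your path-by-path construction is more uniform in style but longer to write out.
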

\begin{proof}
By definition, $\lambda_{G}(s,a)\le d_G(s)$ for all $s \in V(G)-a$.
First observe that for every $z \in Z$, $G$ contains the $ba$-path $bza$. By Theorem \ref{mengerun}, this yields $\lambda_G(b,a)\geq |Z|=12n=d_G(b)$. Now consider some $e=uv \in E(H)$. By construction, there are some $i,j \in \{1,2\}$ such that $G$ contains the edges $q_i^{u}x^{e},p_i^{u}y^{e},q_j^{v}x^{e}$ and $p_j^{v}y^{e}$. Due to the pairwise edge-disjoint $x^{e}a$-paths $T_1=x^{e}z^{e}_1a,T_2=x^{e}y^{e}z^{e}_2a,T_3= x^{e}q_i^{u}\ldots q_0^{u}a$ and $T_4= x^{e}q_j^{v}\ldots q_0^{v}a$ and Theorem \ref{mengerun}, we obtain that $\lambda_G(x^{e},a)\geq 4=d_G(x^{e})$. Due to the pairwise edge-disjoint $y^{e}a$-paths $T_1=y^{e}x^{e}z^{e}_1a,T_2=y^{e}z^{e}_2a,T_3= y^{e}z^{e}_3a,T_4= y^{e}z^{e}_4a, T_5=y^{e}p_i^{u}\ldots p_0^{u}a$ and $T_6=y^{e}p_j^{v}\ldots p_0^{v}a$ and Theorem \ref{mengerun}, we obtain that $\lambda_G(y^{e},a)\geq 6=d_G(y^{e})$. 
 Finally, suppose for the sake of a contradiction that  for some $t\in P\cup Z,$ $\lambda_G(t,a)<d_G(t)=3$. Let $a\in R \subseteq V(G)-t$ with $d_G(R)=\lambda_G(t,a)$. As $\lambda_G(s,a)\geq 3$ for all $s \in X \cup b$, we obtain $X \cup b \subseteq R$. Hence, since every $z \in Z$ is adjacent to three vertices in $R$, we obtain $Z \subseteq R$, so $t\in P.$  As $t$ is adjacent to three distinct vertices in $G$ and every vertex in $P$ is linked to $R$, we obtain $d_G(R)\geq 3$, a contradiction. 
\end{proof}
By Propositions \ref{prel3} and \ref{wbd}, we have the following characterization of well-balanced orientations of $G.$

\begin{Corollary}\label{ggvftu}
Let $\vec{G}$ be an orientation of $G.$
	\begin{itemize}
		\item[(a)] $\vec{G}$ is well-balanced if and only if $\lambda_{\vec{G}}(a,s)\geq \lfloor\frac{d_G(s)}{2}\rfloor$ and $\lambda_{\vec{G}}(s,a)\geq \lfloor\frac{d_G(s)}{2}\rfloor$ for all $s \in V(G)-a$.
		\item[(b)] If $\vec{G}$ is well-balanced, then $d^+_{\vec{G}}(s)=d^-_{\vec{G}}(s)=\frac{d_G(s)}{2}=\lambda_{\vec{G}}(a,s)=\lambda_{\vec{G}}(s,a)$   for all $s \in X\cup b$.
\end{itemize}
\end{Corollary}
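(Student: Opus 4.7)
My plan is to prove the two parts directly from Propositions \ref{prel3} and \ref{wbd}, together with the elementary fact that for any $s\neq a$ we have $\lambda_{\vec{G}}(a,s)\leq d^-_{\vec{G}}(s)$ and $\lambda_{\vec{G}}(s,a)\leq d^+_{\vec{G}}(s)$ (obtained by taking $R=\{s\}$ in the first minimum and $R=V(G)-\{s\}$ in the second).

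For part (a), the ``if'' direction is exactly Proposition \ref{prel3} applied with the distinguished vertex $a$. For the ``only if'' direction, Proposition \ref{wbd} gives $\lambda_G(s,a)=d_G(s)$ for every $s\in V(G)-a$. Combined with the symmetry $\lambda_G(s,a)=\lambda_G(a,s)$ and the definition of well-balancedness, we conclude
\[
\lambda_{\vec{G}}(a,s)\geq \left\lfloor\tfrac{\lambda_G(a,s)}{2}\right\rfloor=\left\lfloor\tfrac{d_G(s)}{2}\right\rfloor,
\]
and analogously for $\lambda_{\vec{G}}(s,a)$.

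For part (b), first note that for every $s\in X\cup b$, the degree $d_G(s)$ is even: $d_G(b)=12n$, $d_G(x^e)=4$ and $d_G(y^e)=6$. Hence $\lfloor d_G(s)/2\rfloor = d_G(s)/2$. By part (a),
\[
d^-_{\vec{G}}(s)\geq \lambda_{\vec{G}}(a,s)\geq \tfrac{d_G(s)}{2},\qquad d^+_{\vec{G}}(s)\geq \lambda_{\vec{G}}(s,a)\geq \tfrac{d_G(s)}{2}.
\]
Since $d^+_{\vec{G}}(s)+d^-_{\vec{G}}(s)=d_G(s)$, both inequalities must hold with equality, which forces all four quantities $d^+_{\vec{G}}(s),d^-_{\vec{G}}(s),\lambda_{\vec{G}}(a,s),\lambda_{\vec{G}}(s,a)$ to equal $d_G(s)/2$.

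There is no real obstacle here: this corollary is a direct bookkeeping consequence of Proposition \ref{prel3}, Proposition \ref{wbd}, and the observation that cuts around a single vertex give trivial upper bounds on $\lambda_{\vec{G}}$. The only thing to be slightly careful about is invoking the even degrees of the vertices in $X\cup b$, which relies on the explicit construction of $G$ in Section \ref{def}.
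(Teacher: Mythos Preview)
Your proof is correct and follows essentially the same approach as the paper: part~(a) is obtained directly from Propositions~\ref{prel3} and~\ref{wbd}, and part~(b) is the same squeeze argument using that $d_G(s)$ is even for $s\in X\cup b$, the lower bounds from~(a), the trivial upper bounds $\lambda_{\vec{G}}(a,s)\leq d^-_{\vec{G}}(s)$, $\lambda_{\vec{G}}(s,a)\leq d^+_{\vec{G}}(s)$, and $d^+_{\vec{G}}(s)+d^-_{\vec{G}}(s)=d_G(s)$.
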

\begin{proof}
 $(a)$ The sufficiency is Proposition \ref{prel3}. The necessity is an immediate consequence of the definition of well-balanced orientations and Proposition \ref{wbd}. 

$(b)$ Suppose that $\vec{G}$ is well-balanced and let $s  \in X \cup b$. As $d_G(s)$ is even and by $(a)$, we have $d_G(s)=2\lfloor\frac{d_{G}(s)}{2}\rfloor\leq \lambda_{\vec{G}}(s,a)+\lambda_{\vec{G}}(a,s)\leq d^+_{\vec{G}}(s)+d^-_{\vec{G}}(s)=d_G(s)$, hence equality holds throughout.
\end{proof}

\subsection{Convenient orientations}\label{convor}

In order to prove that the reduction works indeed, we wish to consider a certain restricted class of orientations. We now define a mixed graph $F$  which  is obtained as a partial orientation of $G$.

 First for every $e \in E(H)$ and $i \in \{1,2\}$, let the edge $az^{e}_i$ be oriented from $a$ to $z^{e}_i$ and the edge $bz^{e}_i$ be oriented from $z^{e}_i$ to $b$. For every $e \in E(H)$ and $i \in \{3,4\}$, let the edge $az^{e}_i$ be oriented from $z^{e}_i$ to $a$ and the edge $bz^{e}_i$ be oriented from $b$ to $z^{e}_i$. Let all the edges linking $X$ and $Z$ be oriented from $X$ to $Z$. For every $e \in E(H)$, let the edge $x^{e}y^{e}$ be oriented from $x^{e}$ to $y^{e}$. Next, let all the edges linking $P$ and $X$ be oriented from $P$ to $X$.  For every $v \in V(H)$ and $i \in \{0,1\}$, let the edge $p^v_{i}p^v_{i+1}$ be oriented from $p^v_{i}$ to $p^v_{i+1}$ and  let the edge $q^v_{i}q^v_{i+1}$ be oriented from $q^v_{i}$ to $q^v_{i+1}$. We denote the obtained partial orientation of $G$ by $F$. 
 Observe that the edge set of $F$ consists  of the 3 edges $aq^v_0,ap^v_0,p^v_0q^v_0$ for every $v \in V(H)$. 
 An illustration of $F$ can be found in Figure \ref{dad4}.

\begin{figure}[h]\begin{center}
  \includegraphics[]{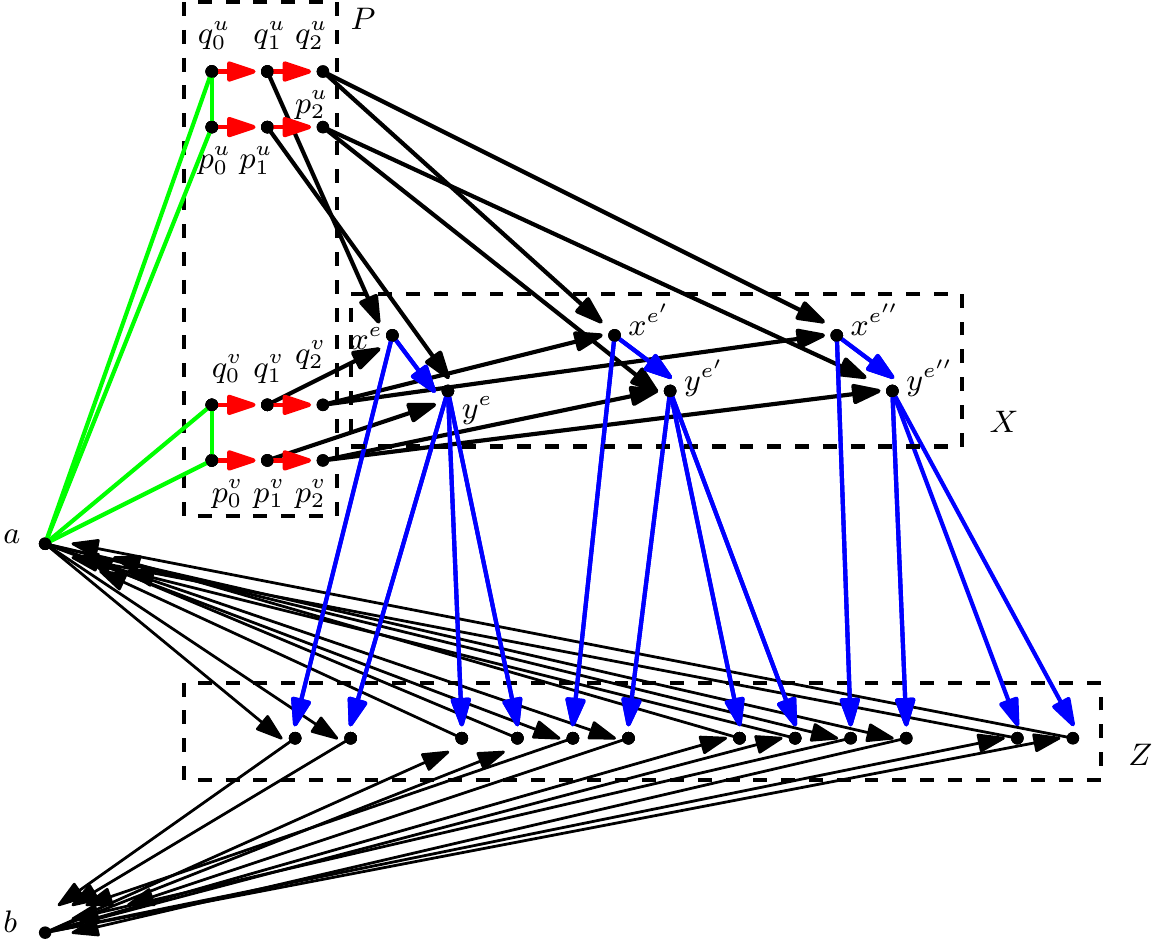} 
  \caption{An example for the mixed graph $F$ created from the same graph $H$  as considered in Figure \ref{dad3}. The edges of $F$ are marked in green.}\label{dad4}
\end{center}
\end{figure}

We now say that an orientation $\vec{G}$ of $G$ is {\it convenient} if $\vec{G}$ is also an orientation of $F$. The following  lemma contains a characterization of convenient, well-balanced orientations of $G$ which is a crucial ingredient for proving the correctness of our reduction.

\begin{Lemma}\label{vfasf}
A convenient orientation $\vec{G}$ of $G$ is well-balanced if and only if for every $uv \in E(H)$, 

(i) either the edges from $a$ to $\{p^u_0,q^u_0\}$ are oriented from $a$ to $\{p^u_0,q^u_0\}$ or the edges from $a$ to $\{p^v_0,q^v_0\}$ are oriented from $a$ to $\{p^v_0,q^v_0\}$,

(ii) in $\vec{G}[\{a,p^u_0,q^u_0,p^v_0,q^v_0\}]$, there is a directed $as$-path  for all $s \in\{p^u_0,q^u_0,p^v_0,q^v_0\}$. 
\end{Lemma}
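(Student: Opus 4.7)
The plan is to invoke Corollary \ref{ggvftu}(a) to reduce the well-balancedness of a convenient orientation to the conditions $\lambda_{\vec{G}}(a,s) \geq \lfloor d_G(s)/2 \rfloor$ and $\lambda_{\vec{G}}(s,a) \geq \lfloor d_G(s)/2 \rfloor$ for every $s \in V(G) - a$, and then to translate each direction of the equivalence into an arc-disjoint path computation via Theorem \ref{menger}. The key structural observation about convenient orientations is that the unoriented part of $F$ consists precisely of the three edges $ap^v_0$, $aq^v_0$, $p^v_0 q^v_0$ for every $v \in V(H)$; every other edge incident to $p^v_0$ or $q^v_0$ is already oriented away from that vertex in $F$, so in $\vec{G}$ the in-arcs of $p^v_0$ lie in $\{a \to p^v_0, q^v_0 \to p^v_0\}$ and those of $q^v_0$ lie in $\{a \to q^v_0, p^v_0 \to q^v_0\}$.

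For the forward direction, suppose $\vec{G}$ is convenient and well-balanced. By the in-arc analysis above, any directed $a$-to-$p^v_0$ path in $\vec{G}$ is confined to the subgraph induced by $\{a, p^v_0, q^v_0\}$, and similarly for $a$-to-$q^v_0$. Since $\lambda_{\vec{G}}(a, p^v_0), \lambda_{\vec{G}}(a, q^v_0) \geq 1$ by Corollary \ref{ggvftu}(a), condition (ii) follows for every $uv \in E(H)$. For (i), I would fix $e = uv \in E(H)$ and use $\lambda_{\vec{G}}(a, y^e) \geq 3$ together with Theorem \ref{menger} to obtain three arc-disjoint $a \to y^e$ paths; since $y^e$ has exactly three in-arcs, coming from $x^e$, from some $p^u_? \in V(G^u)$, and from some $p^v_? \in V(G^v)$, these paths use each in-arc once. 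The paths ending with $p^u_? \to y^e$ and $p^v_? \to y^e$ enter $V(G^u)$ and $V(G^v)$ through $a$-arcs, respectively, while the path through $x^e$ must reach $x^e$ via $q^u_? \to x^e$ or $q^v_? \to x^e$ and hence also enters one of $V(G^u), V(G^v)$ through an $a$-arc. Two of the three paths therefore enter the same gadget $V(G^w)$ for some $w \in \{u, v\}$, and as the only possible $a$-arcs into $V(G^w)$ are $a p^w_0$ and $a q^w_0$, arc-disjointness forces both of them to be oriented from $a$, which is (i).

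For the backward direction, assume (i) and (ii) and construct for every $s \neq a$ the arc-disjoint paths required by Corollary \ref{ggvftu}(a). For $s \in Z$ and $s = b$ the paths come from the $F$-oriented backbone: the $6n$ patterns $a \to z^e_1 \to b$ and $a \to z^e_2 \to b$ supply $6n$ arc-disjoint $a \to b$ paths, while $b \to z^e_3 \to a$ and $b \to z^e_4 \to a$ supply $6n$ arc-disjoint $b \to a$ paths, and a single path to each $z^e_i$ is extracted from the same backbone. For $s = x^e$ with $e = uv$, the two needed $a \to x^e$ paths are routed, one through the $q$-chain of $V(G^u)$ and the other through that of $V(G^v)$, using (ii) to reach $q^u_0$ and $q^v_0$; the reverse direction uses the two out-arcs of $x^e$ together with two disjoint $b \to a$ segments of the backbone. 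For $s = y^e$ the three $a \to y^e$ paths use the in-arcs from $x^e$, $p^u_?$, and $p^v_?$: here (i) supplies the two disjoint $a$-arcs into one of the gadgets $V(G^u), V(G^v)$ that two of the three paths need to share, while (ii) supplies the single $a$-arc into the other gadget. The reverse direction at $y^e$ is immediate from the out-arcs $y^e \to z^e_2, z^e_3, z^e_4$ combined with backbone segments, and the degree-$3$ vertices inside $\bigcup_{v \in V(H)} V(G^v)$ only require single paths each way, built by combining (ii) with chains such as $p^v_0 \to p^v_1 \to \ldots \to y^{e_i} \to z^{e_i}_2 \to b \to \ldots \to a$ together with their symmetric counterparts.

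The main obstacle will be maintaining arc-disjointness in the routing at $y^e$: the three $a \to y^e$ paths all draw arcs from $a$ into the two gadgets $V(G^u), V(G^v)$, and disjointness forces one of these gadgets to accommodate two such arcs simultaneously. Condition (i) is tailored precisely to this bottleneck, and the delicate bookkeeping consists in verifying that the chosen $p$- and $q$-chains inside each gadget, together with the chosen $z$-$b$ segments of the backbone, never reuse an arc.
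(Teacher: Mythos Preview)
Your proposal is correct and follows essentially the same strategy as the paper: reduce via Corollary~\ref{ggvftu}(a) to the two-sided connectivity conditions at $a$, then handle each vertex class by exhibiting the required arc-disjoint paths using the rigid structure of~$F$. The one substantive difference is your forward argument for~(i): you extract three arc-disjoint $a\to y^e$ paths from Theorem~\ref{menger} and trace each one back through the $p$- and $q$-chains to conclude that two of them must enter the same gadget $V(G^w)$ via the distinct arcs $ap^w_0$ and $aq^w_0$; the paper instead applies the cut $R=V(G^u)\cup V(G^v)\cup\{x^e,y^e\}$ and reads off $d^-_{\vec G}(R)\ge \lambda_{\vec G}(a,y^e)=3$ directly, noting that only the four edges $ap^u_0,aq^u_0,ap^v_0,aq^v_0$ can enter $R$. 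The two arguments are Menger-dual to one another and both are valid, though the cut version is a little shorter since it bypasses the path-tracing inside the gadgets.
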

\begin{proof}
First suppose that $\vec{G}$  is well-balanced and let $e=uv \in E(H)$. Consider the set $R=V(G^{u})\cup V(G^{v}) \cup \{x^{e},y^{e}\}$. By $y^e\in R\subseteq V-a$ and Corollary \ref{ggvftu}(b), we have $d_{\vec{G}}^-(R)\geq \lambda_{\vec{G}}(a,y^{e})=\frac{d_G(y^{e})}{2}=3$. As $\vec{G}$ is convenient, it follows that the only arcs entering $R$ in $\vec{G}$ have the tail $a$. Since  the set of edges linking $a$ and $R$ consists of the four edges $ap^u_0,aq^u_0,ap^v_0,$ and $aq^v_0$, we get that either the arcs $ap^u_0$ and $aq^u_0$ exist in $\vec{G}$ or the arcs $ap^v_0$ and $aq^v_0$ exist in $\vec{G}$, that is (i) holds. 

Consider a vertex $s$ in $\{p^u_0,q^u_0,p^v_0,q^v_0\}.$ Since $\vec{G}$  is well-balanced, by Corollary \ref{ggvftu}(a), $d_G(s)=3$ and  Theorem \ref{menger}, there exists a directed $as$-path in $\vec{G}$. Since $\vec{G}$ is convenient, this path also exists in $\vec{G}[\{a,p^u_0,q^u_0,p^v_0,q^v_0\}]$, that is (ii) holds.


\medskip

  For the other direction, we will use that $\vec{G}$ is convenient several times without explicit mention. 
  By Corollary \ref{ggvftu}(a), it suffices to prove that $\lambda_{\vec{G}}(a,s)\geq \lfloor\frac{d_G(s)}{2}\rfloor$ and $\lambda_{\vec{G}}(s,a)\geq \lfloor\frac{d_G(s)}{2}\rfloor$ for all $s \in V(G)-a$.


First we consider $b$. For every $e \in E(H)$, $i \in\{1,2\}$ and $j\in\{3,4\}$,  $a z^{e}_ib$ and $bz^e_ja$ is a directed $ab$-path and $ba$-path, respectively, in $\vec G$. We obtain, by Theorem \ref{menger}, that $\lambda_{\vec{G}}(a,b),$ $\lambda_{\vec{G}}(b,a) \geq 2|E(H)| =\lfloor\frac{d_G(b)}{2}\rfloor$. 

We next consider the vertices in $X.$
Let $e=uv \in E(H)$. By construction, there are indices $i , j\in \{1,2\}$ such that $\vec{G}$ contains the arcs $p^{u}_{i}y^{e}, q^{u}_{i}x^{e},p^{v}_{j}y^{e}$, and $q^{v}_{j}x^{e}$. By (ii), there is a directed $as$-path $T_s$ in $\vec{G}[\{a,p^u_0,q^u_0,p^v_0,q^v_0\}]$  for every $s \in \{p^u_0,q^u_0,p^v_0,q^v_0\}$.

Since $T_1=T_{q_0^{u}}q_0^{u}q_1^u\ldots q_i^ux^{e}$ and $T_2=T_{q_0^v}q_0^vq_1^v\ldots q_i^vx^{e}$ are two arc-disjoint directed $ax^{e}$-paths, by Theorem \ref{menger}, we obtain $\lambda_{\vec{G}}(a,x^{e})\geq 2= \lfloor\frac{d_G(x^{e})}{2}\rfloor$.
Since  $T_1=x^{e}z^{e}_1bz^{e}_3a$ and $T_2=x^{e}y^{e}z^{e}_4a$ are two arc-disjoint directed $x^{e}a$-paths, by Theorem \ref{menger}, we obtain $\lambda_{\vec{G}}(x^{e},a)\geq 2= \lfloor\frac{d_G(x^{e})}{2}\rfloor$.

Next consider $T_1=y^{e}z^{e}_2bz^{e'}_3a,T_2=y^{e}z^{e}_3a$ and $T_3=y^{e}z^{e}_4a$, where $e' \in E(H)-e$ is chosen arbitrarily. These are three arc-disjoint directed $y^{e}a$-paths, so by Theorem \ref{menger}, we have $\lambda_{\vec{G}}(y^{e},a)\geq 3= \lfloor\frac{d_G(y^{e})}{2}\rfloor$.
For the next part, by (i) and symmetry, we may suppose that the arcs $ap^u_0,aq^u_0$ exist in $\vec{G}$. Since  $T_1=ap_0^u\ldots p_i^uy^{e}, T_2=aq_0^u\ldots q_i^uy^{e}$, and $T_3=T_{p_0^v}p_1^v\ldots p_j^vy^{e}$ are three arc-disjoint directed $ay^{e}$-paths, by Theorem \ref{menger}, we obtain $\lambda_{\vec{G}}(a,y^{e})\geq 3= \lfloor\frac{d_G(y^{e})}{2}\rfloor$.

Let $R$ be the vertex set of the strongly connected component of $\vec{G}$ containing $a$. By the above, we have $X \cup b \subseteq R$. Next, every $z \in Z$ is incident to an arc entering $R-z$ and an arc leaving $R-z$, so $Z \subseteq R$. Now let $v \in V(H)$. For every $p \in V(G^v)$, by $(ii)$, a directed $ap$-path is contained in $\vec{G}[V(G^v)\cup a]$. Further, $\vec{G}$ contains a directed path from $p$ to $X$. We hence obtain that $P \subseteq R$, so $\vec{G}$ is strongly connected. 
This yields $\lambda_{\vec{G}}(a,s) \geq 1 =\lfloor\frac{d_G(s)}{2}\rfloor$ and $\lambda_{\vec{G}}(s,a) \geq 1 =\lfloor\frac{d_G(s)}{2}\rfloor$ for all $s \in P \cup Z$.
\end{proof}



\subsection{From vertex cover to orientation}\label{vcwell}
In this section, we give the first direction of the reduction. More formally, we prove the following result.
\begin{Lemma}\label{covtowell}
If there exists a vertex cover of size at most $k$ of $H$, then there exists an $\ell$-bounded, well-balanced orientation of $G$.
\end{Lemma}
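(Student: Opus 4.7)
The plan is to extend the partial orientation $F$ to a full orientation $\vec{G}$ of $G$ by orienting, for each $v \in V(H)$, the three remaining edges $ap_0^v, aq_0^v, p_0^v q_0^v$ according to whether $v$ belongs to a fixed vertex cover $U$ of $H$ with $|U|\leq k$. Any such extension is convenient by construction, so Lemma \ref{vfasf} applies and I only need to verify its conditions (i) and (ii) together with the $\ell$-bound.

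Specifically, for each $v \in U$ I would orient both $ap_0^v$ and $aq_0^v$ out of $a$, leaving $p_0^v q_0^v$ oriented arbitrarily. For each $v \in V(H)\setminus U$ I would orient $aq_0^v$ out of $a$, the edge $ap_0^v$ as $p_0^v a$ (into $a$), and $p_0^v q_0^v$ as $q_0^v p_0^v$. Call the resulting convenient orientation $\vec{G}$.

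The $\ell$-boundedness reduces to two checks. For each $z \in Z$, inspection of $F$ shows that $z$ has exactly one outgoing arc in $\vec{G}$ (to $b$ if $z \in \{z_1^e, z_2^e\}$ and to $a$ if $z \in \{z_3^e, z_4^e\}$), matching $\ell(z) = 1$. At $a$, the $6n$ out-arcs to $Z$ inherited from $F$ are augmented by $2$ arcs for each $v \in U$ and $1$ arc for each $v \in V(H)\setminus U$, so $d_{\vec{G}}^+(a) = 6n + 2|U| + (2n - |U|) = 8n + |U| \leq 8n + k = \ell(a)$. All other vertices carry the trivial bound $\ell = d_G$.

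Conditions (i) and (ii) of Lemma \ref{vfasf} then follow from the vertex cover property of $U$. For any $uv \in E(H)$, at least one of $u, v$ lies in $U$, which yields (i). For (ii), if $w \in U$ then $p_0^w$ and $q_0^w$ are reached from $a$ by direct arcs $ap_0^w, aq_0^w$, while if $w \in V(H)\setminus U$ then the arc $aq_0^w$ handles $s = q_0^w$ and the length-two directed path $a, q_0^w, p_0^w$ handles $s = p_0^w$; in either case, the required paths lie inside $\vec{G}[\{a,p_0^u,q_0^u,p_0^v,q_0^v\}]$. Applying Lemma \ref{vfasf}, $\vec{G}$ is well-balanced, which completes the proof. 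The only delicate point is matching the count $8n + |U|$ to the threshold $\ell(a) = 8n + k$; this is precisely where the hypothesis $|U|\leq k$ enters, and the rest of the construction is engineered to force each vertex outside the cover to contribute only a single out-arc at $a$.
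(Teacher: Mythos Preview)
Your proof is correct and follows essentially the same approach as the paper: construct a convenient orientation from the vertex cover $U$ so that Lemma~\ref{vfasf} applies, and verify the $\ell$-bound at $a$ via the same count $d_{\vec{G}}^+(a)=8n+|U|\le 8n+k$. The only cosmetic difference is that the paper orients $ap_0^v$ and $p_0^vq_0^v$ uniformly as the path $a\to p_0^v\to q_0^v$ for all $v$ and varies only the direction of $aq_0^v$, whereas you swap the roles of $p_0^v$ and $q_0^v$ for $v\notin U$; the two constructions are symmetric and the verification is identical.
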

\begin{proof}
Let $U$ be a vertex cover of size at most $k$ of $H$. Let $\vec{G}$ be the unique convenient orientation of $G$ in which for every $v \in V(H)$, the edges $ap_0^v,p_0^vq_0^v$ are oriented to a directed path   $ap_0^vq_0^v$; further the edge $aq_0^v$ is oriented from $a$ to $q_0^v$ if and only if $v \in U$. By Lemma \ref{vfasf}  and as $U$ is a vertex cover, we obtain that $\vec{G}$ is well-balanced. By construction, we have $d_{\vec{G}}^+(s)\leq \ell(s)$ for all $s \in V(G)-a$. Finally, $\vec{G}$ contains $2|E(H)|$ arcs from $a$ to $Z$, one arc from $a$ to $p_0^v$ for all $v \in V(H)$ and one arc from $a$ to $q^v_0$ for all $v \in U$. This yields $d_{\vec{G}}^+(a)=2|E(H)|+|V(H)|+|U|\leq 6n+2n+k=\ell(a)$, so $\vec{G}$ is $\ell$-bounded.
\end{proof}

\subsection{Making a well-balanced orientation convenient}\label{faireconv}
In this section, we give a slightly technical lemma that shows that if an $\ell$-bounded, well-balanced orientation of $G$ exists, we can also find one which is additionally convenient.

\begin{Lemma}\label{sconv}
If there exists a well-balanced, $\ell$-bounded orientation of $G$, then there also exists a convenient, well-balanced, $\ell$-bounded orientation of $G$.
\end{Lemma}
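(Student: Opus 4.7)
The plan is to convert $\vec{G}_0$ into a convenient orientation by reversing a single carefully chosen eulerian directed subgraph of $\vec{G}_0$, so that Proposition~\ref{prel5} delivers both well-balancedness and the $\ell$-bound for free.

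I would first record the rigid out-degree data that is forced by the hypothesis. Corollary~\ref{ggvftu}(b) pins $d^+_{\vec{G}_0}(s) = d^-_{\vec{G}_0}(s) = d_G(s)/2$ for every $s \in X \cup \{b\}$. For every $z \in Z$, the bound $\ell(z)=1$ combined with $\lambda_{\vec{G}_0}(z,a) \ge 1$ (from Corollary~\ref{ggvftu}(a)) forces $d^+_{\vec{G}_0}(z)=1$ and $d^-_{\vec{G}_0}(z)=2$. A direct inspection shows that these out-degrees coincide with those prescribed by the partial orientation $F$ at the same vertices.

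Next I set $W$ to be the set of arcs of $\vec{G}_0$ whose reversal is an arc of $F$, i.e.\ the arcs on which $\vec{G}_0$ disagrees with $F$. The degree match in the previous paragraph implies $d^+_W(s) = d^-_W(s)$ at every $s \in X \cup \{b\} \cup Z$. The goal is to augment $W$ into an eulerian subdigraph $D$ of $\vec{G}_0$ by including a suitable subset of the $\vec{G}_0$-arcs supported on the three free edges $ap^v_0, aq^v_0, p^v_0q^v_0$ of $F$. These are the only edges incident to $a$, $p^v_0$, and $q^v_0$ that are not yet oriented by $F$, and they give exactly the slack needed to balance $D$ at those three families of vertices; the $\ell$-bound $d^+_{\vec{G}_0}(a) \le 8n+k$ together with the tautological identity $\sum_s d^+_{\vec{G}_0}(s) = |E(G)|=65n$ ensures the required feasibility at $a$. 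Reversing $D$ then yields an orientation of $G$ extending $F$, which is convenient by definition, and Proposition~\ref{prel5} yields the remaining properties.

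The main obstacle is to verify that $W$ is already balanced at the interior vertex-gadget vertices $p^v_i$ and $q^v_i$ for $i \in \{1,2\}$: these have no incident free edge, so no augmentation can compensate an imbalance there, and the balance translates to $d^+_{\vec{G}_0}(p^v_i)=d^+_{\vec{G}_0}(q^v_i)=2$. I would prove this by combining the global count $\sum_{p \in P} d^+_{\vec{G}_0}(p) = 32n - d^+_{\vec{G}_0}(a) \ge 24n-k$ with the fact that every $p \in P$ has $d^+_{\vec{G}_0}(p) \in \{1,2\}$ (from $\lambda_{\vec{G}_0}(a,p), \lambda_{\vec{G}_0}(p,a) \ge 1$), bounding the number of $P$-vertices of out-degree $1$ by $k$, and then applying local cut inequalities at $R = \{y^{e_j}, p^v_i\}$ (and its $q$-variant), using $\lambda_{\vec{G}_0}(a, y^{e_j}) = \lambda_{\vec{G}_0}(y^{e_j}, a) = 3$ from Corollary~\ref{ggvftu}(b), to force the at most $k$ exceptional $P$-vertices to lie among the $p^v_0, q^v_0$ rather than at any interior $p^v_i, q^v_i$. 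Once this is settled, $D$ is eulerian, its reversal is the desired orientation, and the proof concludes.
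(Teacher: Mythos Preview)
Your single-reversal strategy is a natural idea, but the argument breaks at precisely the step you flag as ``the main obstacle.'' The cut $R=\{y^{e_j},p^v_i\}$ does \emph{not} force $d^+_{\vec{G}_0}(p^v_i)=2$. Writing it out, $d^-_{\vec{G}_0}(R)=d^-_{\vec{G}_0}(y^{e_j})+d^-_{\vec{G}_0}(p^v_i)-1=2+d^-_{\vec{G}_0}(p^v_i)$, so $\lambda_{\vec{G}_0}(a,y^{e_j})\ge 3$ yields only $d^-_{\vec{G}_0}(p^v_i)\ge 1$; symmetrically $d^+_{\vec{G}_0}(p^v_i)\ge 1$. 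You recover nothing beyond $d^+_{\vec{G}_0}(p^v_i)\in\{1,2\}$, which you already had. Your global count merely bounds the number of out-degree-$1$ vertices in $P$ by $k$; it gives no mechanism to place them at $p^v_0,q^v_0$ rather than at the interior vertices, and no cut around a single $y^{e_j}$ and a neighbouring $p^v_i$ can supply that localisation. If even one interior $p^v_i$ has $d^+_{\vec G_0}(p_i^v)=1$ your set $W$ is unbalanced there with no free edge available to compensate, and the construction of $D$ collapses.

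The fact you need is true, but the only route I see to it is essentially the paper's: one first corrects all edges incident to $Z$ by reversing suitable directed circuits through $a$ and $b$ (this is where the $ba$-path system from $\lambda_{\vec{G}_0}(b,a)=6n$ enters), and only \emph{after} that do the degree identities at $x^e$ and $y^e$ from Corollary~\ref{ggvftu}(b) force every $P$--$X$ edge to point into $X$; then the strong-connectivity requirement $\lambda(a,p^v_2)\ge 1$ pins the orientation of $p^v_0p^v_1p^v_2$. The argument is inherently sequential (fix $Z$, then $X$, then the paths), which is why the paper performs two eulerian reversals rather than one. Your approach could in principle be completed, but the missing step would amount to reproving the paper's Claims~\ref{ptox}--\ref{paths} in disguise, so no simplification results. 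A secondary issue is that your feasibility claim for augmenting $W$ at $a,p^v_0,q^v_0$ via the triangle of free edges is asserted rather than argued; even granting balance at the interior vertices, you would still need to check that the $\vec{G}_0$-orientations of the free edges permit the required local corrections at each triangle simultaneously.
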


\begin{proof}
Let $\vec{G}_0$ be a well-balanced, $\ell$-bounded orientation of $G.$

Let $Z_0^+$ be the set of all $z \in Z$ such that $\vec{G}_0$ contains the arc $bz$ and let $Z_0^-=Z-Z_0^+$. As $\vec{G}_0$ is well-balanced and by Corollary \ref{ggvftu} $(b)$, we have $|Z_0^+|=|Z_0^-|=6n$. Further, let $Z_0^*$ be the set of all $z \in Z$ such that $\vec{G}_0$ contains an arc from $z$ to $X$. Observe that $Z_0^* \subseteq Z_0^+$ because $\vec{G}_0$ is $\ell$-bounded.

\begin{Claim}
There is a set of pairwise arc-disjoint circuits $\{C_z:z \in Z_0^*\}$ such that $V(C_z)\cap Z =z$ for all $z \in Z_0^*$.
\end{Claim}

\begin{proof}
By Corollary \ref{ggvftu}(b), we have $\lambda_{\vec{G}_0}(b,a)=d_{\vec{G}_0}^+(b)=\frac{d_G(b)}{2}=6n$.
By Theorem \ref{menger}, there is a set $\mathcal{T}$ of $6n$ pairwise arc-disjoint directed $ba$-paths in $\vec{G}_0$.
For all $z \in Z_0^-$, as $\vec{G}_0$ is $\ell$-bounded and contains the arc $zb$, we obtain that $z$ is not contained in a  directed $ba$-path of $\mathcal{T}$. 

Clearly, every $T \in \mathcal{T}$ contains a vertex in $Z_0^+$. Further, as $\vec{G}_0$ is $\ell$-bounded and the directed $ba$-paths in $\mathcal{T}$ are pairwise arc-disjoint, no vertex in $Z_0^+$ can be contained in two distinct $ba$-paths of $\mathcal{T}$. As $|Z_0^+|=6n=|\mathcal{T}|$, we obtain that every $z \in Z_0^+$ is contained in exactly one directed $ba$-path $T_z$ of $\mathcal{T}$ and $T_z$ satisfies $V(T_z)\cap Z =z$. For every $z \in Z_0^*$, as $\vec{G}_0$ is $\ell$-bounded, the arc $az$ is contained in $\vec{G}_0$. Now let $C_z$ be obtained from $T_z$ by deleting the arc $bz$ and adding the arc $az$. Then $C_z$ is a circuit. Since the directed $ba$-paths in $\mathcal{T}$ are arc-disjoint, $\{C_z:z \in Z_0^*\}$ has the desired properties.
\end{proof}

Let $\vec{G}_1$ be obtained from $\vec{G}_0$ by reversing all the arcs of $\cup_{z \in Z_0^*}A(C_z)$. Observe that in $\vec{G}_1$ all the edges linking $Z$ and $X$ are oriented from $Z$ to $X$. Further observe that for all $z \in Z_0^+$, $\vec{G}_1$ contains the directed $ba$-path $bza$ and for all $z \in Z_0^-$, $\vec{G}_1$ contains the directed $ab$-path $azb$. Now let  $Z^{1,2}=\bigcup_{e \in E(H)}\{z^e_1,z^e_2\}$ and $Z^{3,4}=Z-Z^{1,2}$. Let $D$ be the spanning directed subgraph of $\vec{G}_1$ whose arc set is $\bigcup_{z \in Z_0^+ \cap Z^{1,2}}\{bz,za\} \cup \bigcup_{z \in Z_0^- \cap Z^{3,4}}\{az,zb\}$.

\begin{Claim}
$D$ is eulerian.
\end{Claim}
\begin{proof}
Clearly, we have $d_D^+(s)=d_D^-(s)$ for all $s \in V(G)-\{a,b\}$. Further, we have $d_D^+(b)=|Z_0^+\cap Z^{1,2}|=|Z^{1,2}|-|Z_0^-\cap Z^{1,2}|=6n-|Z_0^-\cap Z^{1,2}|=|Z_0^-|-|Z_0^-\cap Z^{1,2}|=|Z_0^-\cap Z^{3,4}|=d_D^-(b)$ and similarly $d_D^+(a)=d_D^-(a)$.
\end{proof}

Let $\vec{G}_2$ be obtained from $\vec{G}_1$ by reversing the orientation of every arc of $D$.  

Observe that all the edges in $G$ incident to a vertex of $Z$ have the same orientation in $\vec{G}_2$ and $F$. Applying Proposition \ref{prel5} twice, we obtain that $\vec{G}_2$ is well-balanced and $\ell$-bounded. In order to complete the proof of Lemma \ref{sconv}, we  show in the following that $\vec{G}_2$ is convenient.

\begin{Claim}\label{ptox} 
All the edges in $G$ incident to at least one vertex in $X$ have the same orientation in $\vec{G}_2$ and $F$. 
\end{Claim}

\begin{proof}Let $e \in E(H)$.
As observed above, all the edges linking $\{x^{e},y^{e}\}$ and $Z$ are oriented from $\{x^{e},y^{e}\}$ to $Z$ in $\vec{G}_2$. By Corollary \ref{ggvftu}(b), we obtain $d_{\vec{G}_2}^+(y^{e})=d_{\vec{G}_2}^-(y^{e})=\frac{d_G(y^{e})}{2}=3$ and $d_{\vec{G}_2}^+(x^{e})=d_{\vec{G}_2}^-(x^{e})=\frac{d_G(x^{e})}{2}=2$. As $\vec{G}_2$ contains $3$ arcs from $y^{e}$ to $Z$, we obtain that the edges linking $P$ and $y^{e}$ are oriented from $P$ to $y^{e}$ in $\vec{G}_2$ and that the edge $x^{e}y^{e}$ is oriented from $x^{e}$ to $y^{e}$ in $\vec{G}_2$.
As $\vec{G}_2$ contains two arcs from $x^{e}$ to $Z \cup y^{e}$, we obtain that the edges linking $P$ and $x^{e}$ are oriented from $P$ to $x^{e}$ in $\vec{G}_2$.
\end{proof}

\begin{Claim}\label{paths}
For every $v \in V(H)$, the edges in $E(G^v)-\{p_0^vq_0^v\}$ have the same orientation in $\vec{G}_2$ and $F$.
\end{Claim}
\begin{proof}
For every $v \in V(H)$, as $\vec{G}_2$ is well-balanced and by Corollary \ref{ggvftu}(a), we have  $\lambda_{\vec{G}_2}(a,p_2^v)\geq \lfloor\frac{d_G(p_2^v)}{2}\rfloor=1$. Hence, by construction and Claim \ref{ptox}, we obtain that  there is a directed $p_0^vp_2^v$-path  in $\vec{G}_2$, namely $p_0^vp_1^vp_2^v$. Similarly, $q_0^vq_1^vq_2^v$ is a directed $q_0^vq_2^v$-path in $\vec{G}_2$. 
\end{proof}

Claims \ref{ptox} and \ref{paths} finish the proof of the fact that $\vec{G}_2$ is convenient. 
\end{proof}

\subsection{From convenient orientation to vertex cover}\label{tocov}

We now give the last step of the other direction of our reduction. More formally, we prove the following result.

\begin{Lemma}\label{convtocov}
If there is a convenient, well-balanced, $\ell$-bounded orientation of $G$, then there is a vertex cover of size at most $k$ of $H$.
\end{Lemma}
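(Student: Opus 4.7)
The plan is to extract a vertex cover $U$ of $H$ directly from the orientation $\vec{G}$ and then bound $|U|$ via a precise count of $d^+_{\vec{G}}(a)$. I would define
\[U := \{\,v \in V(H) : \text{both arcs } ap_0^v \text{ and } aq_0^v \text{ belong to } \vec{G}\,\}.\]
The fact that $U$ is a vertex cover of $H$ is immediate from Lemma \ref{vfasf}(i): for each $uv \in E(H)$, at least one of $u,v$ lies in $U$.

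To show $|U|\leq k$, I would compute $d^+_{\vec{G}}(a)$ exactly and compare it with $\ell(a)=8n+k$. Since $\vec{G}$ is convenient, its orientation on every edge already oriented in $F$ is fixed. In particular, among the $4|E(H)|=12n$ edges between $a$ and $Z$, exactly $2|E(H)|=6n$ leave $a$ (namely those of the form $az_1^e$ and $az_2^e$). The remaining edges at $a$ are the $2|V(H)|=4n$ edges $\{ap_0^v,\, aq_0^v : v\in V(H)\}$, and my central claim is that each $v \in V(H)$ contributes exactly $2$ such arcs leaving $a$ when $v \in U$ and exactly $1$ when $v\notin U$. Granting this, summation gives $d^+_{\vec{G}}(a) = 6n + 2|U| + (2n - |U|) = 8n + |U|$, so the bound $d^+_{\vec{G}}(a)\leq \ell(a) = 8n+k$ forces $|U|\leq k$.

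The only step requiring real care is the ``exactly one'' half of the claim. Since $H$ is cubic, any $v\in V(H)$ has at least one neighbor $u$; by Lemma \ref{vfasf}(ii), $\vec{G}[\{a,p_0^u,q_0^u,p_0^v,q_0^v\}]$ contains a directed $ap_0^v$-path. A direct inspection of the construction shows that the only edges of $G$ inside this five-vertex set are the four edges incident to $a$ together with $p_0^uq_0^u$ and $p_0^vq_0^v$, so any directed $ap_0^v$-path in the restriction must begin with an arc leaving $a$ into $\{p_0^v, q_0^v\}$ (from $\{p_0^u, q_0^u\}$ one can only reach $a$ or each other). Hence at least one of $ap_0^v, aq_0^v$ leaves $a$, and together with $v\notin U$ this pins down exactly one. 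I do not foresee other significant obstacles: both the vertex-cover property and the counting step follow directly from the characterization in Lemma \ref{vfasf}.
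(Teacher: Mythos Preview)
Your proposal is correct and follows essentially the same approach as the paper: define $U$ via the vertices where both arcs $ap_0^v$, $aq_0^v$ exist, invoke Lemma~\ref{vfasf}(i) for the vertex-cover property, use Lemma~\ref{vfasf}(ii) to see that every $v$ contributes at least one outgoing arc at $a$, and then count $d^+_{\vec{G}}(a)=6n+|V(H)|+|U|$ to conclude $|U|\le k$. Your explicit justification that an $ap_0^v$-path in $\vec{G}[\{a,p_0^u,q_0^u,p_0^v,q_0^v\}]$ must start with an arc into $\{p_0^v,q_0^v\}$ spells out a step the paper leaves implicit.
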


\begin{proof}
Let $\vec{G}$ be a convenient, well-balanced, $\ell$-bounded orientation of $G$. Let $U \subseteq V(H)$ be the set of vertices $v$ for which the arcs $ap_0^v$ and $aq_0^v$ exist in $\vec{G}$. By Lemma \ref{vfasf}, we get that $U$ is a vertex cover of $H$ and for every $v\in V(H),$ at least one arc exists in $\vec{G}$ from $a$ to $V(G_v)$. Next note that there are exactly $2|E(H)|=6n$ arcs leaving $a$ in $F$. As $\vec{G}$ is a convenient, $\ell$-bounded orientation of $G$, we have $|U|=d_{\vec{G}}^+(a)-d_{F}^+(a)-|V(H)|\leq \ell(a)-6n-2n=(8n+k)-8n=k$.
\end{proof}

\subsection{Best-balanced orientations}\label{bbo}

We now show how to extend our reduction to best-balanced orientations. We create an instance $(G',\ell')$ of UBBBO by altering the instance $(G,\ell)$ of UBWBO created in Section \ref{def}. Let $G'$ be obtained from $G$ by adding a set $W$ of $2k$ new vertices and an edge $wa$ for all $w \in W$. Observe that $d_{G'}(a)=d_{G}(a)+|W|=16n+2k$ and $d_{G'}(w)=1$ for all $w\in W$. Further, we set $\ell'(z)=1$ for all $z \in Z$ and we set the trivial bound $\ell'(s)=d_{G'}(s)$ for all $s \in V(G')-Z$.
\begin{Lemma}\label{welltobest}
There exists an $\ell'$-bounded, best-balanced orientation of $G'$ if and only if there exists an $\ell$-bounded, well-balanced orientation of $G$.
\end{Lemma}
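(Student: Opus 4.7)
The plan is to prove both directions by exploiting the simple structure of the added vertices: each $w\in W$ is a pendant whose unique edge is $wa$, so $W$ affects only the out-degree at $a$ (and the trivial out-degrees at the $w$'s), and no vertex of $W$ can lie on a directed path between two vertices of $V(G)$.

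For the converse direction, let $\vec{G}'$ be an $\ell'$-bounded, best-balanced orientation of $G'$ and set $\vec{G}=\vec{G}'[V(G)]$. Since $d_{G'}(a)=16n+2k$ is even, best-balancedness forces $d^+_{\vec{G}'}(a)=8n+k$, and hence $d^+_{\vec{G}}(a)\le d^+_{\vec{G}'}(a)=8n+k=\ell(a)$; the bound $d^+_{\vec{G}}(z)=d^+_{\vec{G}'}(z)\le 1=\ell(z)$ for $z\in Z$ and the trivial bounds elsewhere show $\vec{G}$ is $\ell$-bounded. As $W$ consists of pendants at $a$, no directed path between two vertices of $V(G)$ uses a vertex of $W$, so $\lambda_{\vec{G}}(s,t)=\lambda_{\vec{G}'}(s,t)$ and $\lambda_G(s,t)=\lambda_{G'}(s,t)$ for all $s,t\in V(G)$, and well-balancedness descends from $\vec{G}'$ to $\vec{G}$.

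For the forward direction, let $\vec{G}$ be an $\ell$-bounded, well-balanced orientation of $G$; by Lemma \ref{sconv} we may assume it is also convenient. The crucial observation is that $d^+_{\vec{G}}(a)\in\{8n,8n+1,\ldots,8n+k\}$: in the partial orientation $F$ exactly $6n$ edges at $a$ are oriented outward (the $az^e_i$ with $i\in\{1,2\}$) and $6n$ are oriented inward, so only the $4n$ edges $ap^v_0,aq^v_0$ ($v\in V(H)$) remain free; by Lemma \ref{vfasf}(ii), applied to any edge of $H$ incident to $v$, at least one of $ap^v_0,aq^v_0$ must be oriented out of $a$, contributing at least $|V(H)|=2n$ further outgoing arcs and giving $d^+_{\vec{G}}(a)\ge 8n$, while $d^+_{\vec{G}}(a)\le \ell(a)=8n+k$ is the hypothesis. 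Setting $j=d^+_{\vec{G}}(a)-8n\in\{0,\ldots,k\}$, construct $\vec{G}'$ by orienting $k-j$ of the $2k$ edges $aw$ from $a$ to $w$ and the remaining $k+j$ from $w$ to $a$; then $d^+_{\vec{G}'}(a)=8n+k=d_{G'}(a)/2$, each $w\in W$ has $d^+_{\vec{G}'}(w)\in\{0,1\}$ trivially, and every other vertex $s$ satisfies $d^+_{\vec{G}'}(s)=d^+_{\vec{G}}(s)\in\{\lfloor d_G(s)/2\rfloor,\lceil d_G(s)/2\rceil\}$ (for $s\in X\cup\{b\}$ this is Corollary \ref{ggvftu}(b); for $s\in P\cup Z$ the degree is $3$ and Corollary \ref{ggvftu}(a) gives $d^+,d^-\ge 1$). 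The $\ell'$-bound at $z\in Z$ is inherited from $\vec{G}$, the bound at $a$ is now trivial, and well-balancedness of $\vec{G}'$ follows by the same pendant argument as in the converse direction.

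The main obstacle is the forward-direction lower bound $d^+_{\vec{G}}(a)\ge 8n$: it is what dictates the choice $|W|=2k$, so that $2k$ outgoing-vs-incoming choices for the new edges are exactly enough to move any value in $[8n,8n+k]$ to the unique target value $8n+k$ required by best-balancedness at $a$, and it is the only step in the argument that requires anything beyond the pendant structure of $W$, namely Lemmas \ref{sconv} and \ref{vfasf}.
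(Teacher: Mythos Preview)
Your proof is correct and follows essentially the same approach as the paper's: restrict/extend across the pendant set $W$, use Lemma~\ref{sconv} to pass to a convenient orientation, and exploit the resulting bounds $8n\le d^+_{\vec{G}}(a)\le 8n+k$ to balance the out-degree at $a$ exactly via the $2k$ new edges. In fact you are more careful than the paper in two places: you justify the lower bound $d^+_{\vec{G}}(a)\ge 8n$ explicitly via Lemma~\ref{vfasf}(ii) (the paper merely asserts it), and you argue the near-Eulerian degree condition at vertices of $P\cup Z$ via Corollary~\ref{ggvftu}(a) rather than appealing vaguely to convenience (which by itself does not determine the degree at $p_0^v,q_0^v$). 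Two cosmetic remarks: your labels ``forward'' and ``converse'' are swapped relative to the stated biconditional, and when you invoke ``the same pendant argument'' for well-balancedness of $\vec{G}'$ you should note, as the paper does, that pairs involving a vertex of $W$ are trivial since $\lambda_{G'}(w,\cdot)\le 1$.
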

\begin{proof}
First suppose that there exists an $\ell'$-bounded, best-balanced orientation $\vec{G'}$ of $G'$. Let $\vec{G}=\vec{G'}[V(G)]$. Observe that $\vec{G}$ is an orientation of $G$. Further, as $\vec{G'}$ is well-balanced, for any $(s_1,s_2) \in V(G)\times V(G)$, we have $\lambda_{\vec{G}}(s_1,s_2)=\lambda_{\vec{G'}}(s_1,s_2)\geq \lfloor \frac{\lambda_{G'}(s_1,s_2)}{2}\rfloor = \lfloor \frac{\lambda_{G}(s_1,s_2)}{2}\rfloor$, hence $\vec{G}$ is well-balanced. For any $s \in V(G)-a$,  as $\vec{G'}$ is $\ell'$-bounded, we have $d^+_{\vec{G}}(s)=d^+_{\vec{G'}}(s)\leq \ell'(s)=\ell(s)$. Finally, as $\vec{G'}$ is best-balanced, we have $d_{\vec{G}}^+(a)\leq d_{\vec{G'}}^+(a)\leq \lceil \frac{d_{G'}(a)}{2}\rceil =8n+k = \ell(a)$. Hence $\vec{G}$ is $\ell$-bounded.

Now suppose that there is an $\ell$-bounded, well-balanced orientation of $G$. We obtain by Lemma \ref{sconv} that there is also a  convenient, $\ell$-bounded, well-balanced orientation $\vec{G}$ of $G$. This yields $8n\leq d_{\vec{G}}^+(a)\leq 8n+k$.


 We now create an orientation $\vec{G'}$ by giving every edge in $E(G)$ the orientation it has in $\vec{G}$, orienting  $8n+k- d_{\vec{G}}^+(a)$ of the edges linking $W$ and $a$ from $a$ to $W$ and orienting all the remaining edges linking $W$ and $a$ from $W$ to $a$. For any $(s_1,s_2) \in V(G)\times V(G)$, we have $\lambda_{\vec{G'}}(s_1,s_2)=\lambda_{\vec{G}}(s_1,s_2)\geq \lfloor \frac{\lambda_{G}(s_1,s_2)}{2}\rfloor = \lfloor \frac{\lambda_{G'}(s_1,s_2)}{2}\rfloor$. For any $(s_1,s_2) \in V(G')\times V(G')$ with $\{s_1,s_2\}\cap W \neq \emptyset$, we have $\lambda_{\vec{G'}}(s_1,s_2) \geq 0= \lfloor \frac{\lambda_{G'}(s_1,s_2)}{2}\rfloor$. Hence, $\vec{G'}$ is well-balanced. For every $s \in V(G)-a$, we have $d^+_{\vec{G'}}(s)=d^+_{\vec{G}}(s)\leq \ell(s)=\ell'(s)$.
 Further, as $\vec{G}$ is convenient, we have $d^+_{\vec{G'}}(s) \in \{\lfloor \frac{d_G(s)}{2}\rfloor, \lceil \frac{d_G(s)}{2}\rceil\}=\{\lfloor \frac{d_{G'}(s)}{2}\rfloor, \lceil \frac{d_{G'}(s)}{2}\rceil\}$. For all $w \in W$, we have $d^+_{\vec{G'}}(w)\leq 1=\ell(w)$ and $d^+_{\vec{G'}}(w) \in \{0,1\}=\{\lfloor \frac{d_G(w)}{2}\rfloor,\lceil \frac{d_G(w)}{2}\rceil\}$. Finally, we have $d^+_{\vec{G'}}(a)=d^+_{\vec{G}}(a)+(8n+k-d^+_{\vec{G}}(a))=8n+k=\frac{d_{G'}(a)}{2}\leq \ell'(a)$. Hence $\vec{G'}$ is best-balanced and $\ell'$-bounded.
\end{proof}
\subsection{Conclusion}\label{conc}
 We here conclude the proof of Theorems \ref{well} and \ref{best}. First observe that both UBWBO and UBBBO are clearly in NP. Next observe that the size of both $(G,\ell)$ and $(G',\ell')$ is polynomial in the size of $(H,k)$. By Lemmas \ref{covtowell} to \ref {convtocov}, we obtain that $(G,\ell)$ is a positive instance of UBWBO if and only if $(H,k)$ is a positive instance of CVC. By Lemmas \ref{covtowell} to \ref{welltobest}, we obtain that $(G',\ell')$ is a positive instance of UBBBO if and only if $(H,k)$ is a positive instance of CVC. As CVC is NP-complete by Theorem \ref{vcdure}, Theorems \ref{well} and \ref{best} follow.

\section*{Statements and declarations}

No funding is available. There are no competing interests. The authors have equally contributed. No data is associated to this manuscript.

\end{document}